\documentclass{article}
\usepackage[english]{babel}
\usepackage[utf8x]{inputenc}
\usepackage{amssymb}
\usepackage{amsmath}
\usepackage{mathrsfs}
\usepackage{bm}
\usepackage{amsthm}
\usepackage{enumitem}
\usepackage{color}
\usepackage{graphicx}
\usepackage{bm}

\newtheorem{prop}{Proposition}[section]
\newtheorem{teorema}[prop]{Theorem}
\newtheorem{lemma}[prop]{Lemma}

\newcommand{\g}{\mathfrak}
\newcommand{\R}{\mathbf{R}}
\newcommand{\C}{\mathbf{C}}
\renewcommand{\H}{\mathbf{H}}
\newcommand{\per}{\cdot}
\newcommand{\gr}[1]{{\bf#1}}
\newcommand{\alfa}{\alpha}
\newcommand{\de}{\partial}
\newcommand{\D}{\Delta}
\newcommand{\Z}{\mathbf{Z}}
\newcommand{\ci}[1]{\mathscr{#1}}
\newcommand{\ecc}{\ldots}

\renewcommand{\div}{\operatorname{div}}
\newcommand{\grad}{\nabla}
\renewcommand{\l}{\lambda}
\renewcommand{\o}{\omega}
\newcommand{\bra}{\left\langle}
\newcommand{\ket}{\right\rangle}
\renewcommand{\phi}{\varphi}
\renewcommand{\d}{\delta}
\newcommand{\e}{\varepsilon}
\newcommand{\N}[1]{\left\lVert#1\right\rVert}
\renewcommand{\O}{\Omega}

\newcommand{\mi}{\mu}
\renewcommand{\ni}{\nu}

\DeclareMathOperator{\supp}{supp}

\title{Singular periodic solutions to a critical equation in the Heisenberg group}
\author{Claudio Afeltra\footnote{Scuola Normale Superiore, Piazza dei Cavalieri 7, 56126 Pisa (Italy) - claudio.afeltra@sns.it}}
\date{}

\begin{document}

\maketitle

\begin{abstract}
 We construct positive solutions to the equation
 $$-\D_{\H^n} u = u^{\frac{Q+2}{Q-2}}$$
 on the Heisenberg group, singular in the origin, similar to the Fowler solutions of the Yamabe equations on $\R^n$. These satisfy the homogeneity property $u\circ\d_T=T^{-\frac{Q-2}{2}}u$ for some $T$
 large enough, where $Q=2n+2$ and $\d_T$ is the natural dilation in $\H^n$.
 We use the Lyapunov-Schmidt method applied to a family of approximate solutions built by periodization from the global regular solution
 classified in \cite{JL}.
\end{abstract}

 {\bf MSC2010}: 35R03, 35H20, 35J20, 35J61.
 
 {\bf Keywords}: Subelliptic equations, Perturbation methods.

\section{Introduction}
 Let $\H^n$ be the Heisenberg group with its standard pseudohermitian structure. The problem of studying constant \emph{Webster curvature} pseudohermitian
 structures conformal to the standard one, in the spirit of the classical Riemannian case, is equivalent to find the positive solutions of the equation
 \begin{equation}\label{Equazione}
  -\D_{\H^n} u = u^{\frac{Q+2}{Q-2}},
 \end{equation}
 where $\D_{\H^n}$ is the sublaplacian and $Q=2n+2$ is the homogeneous dimension (in Section \ref{notazioni} we will recall the preliminary
 definitions about the Heisenberg group).
 
 The positive solutions of equation \eqref{Equazione} satisfying some integrability hypotheses were classified by Jerison and Lee \cite{JL},
 and they correspond to conformal factors that trasform the standard pseudohermitian structure of $\H^n$ into the push-forward of the
 pseudohermitian structure of the sphere $\gr{S}^{2n+1}\subset\C^{n+1}$ with respect to the Cayley transform, up to translations and dilations.
 This classification plays an important role in the solution
 of the \emph{CR Yamabe problem}, see \cite{JL2}, \cite{GamYac}, \cite{Gam} and \cite{CMY}.
 
 In the Euclidean space the analogous equation,
 \begin{equation}\label{EquazioneEuclidea}
  -\D_{\H^n} u = u^{\frac{n+2}{n-2}},
 \end{equation}
 is well studied, being related to the Yamabe problem, and being analytically interesting due to a lack of compactness.
 
 The Yamabe equation in $\R^n$ also arises when looking for extremals of the critical Sobolev-Gagliardo-Nirenberg inequality.
 These were classified as ``bubble functions'' independently by Aubin \cite{Au} and Talenti \cite{Ta}.
 
 A complete classification for solutions of \eqref{EquazioneEuclidea} (without integrability hypotheses)
 was given by Caffarelli, Gidas and Spruck \cite{CGS}.
 In the this case also the solutions on $\R^n\setminus\{0\}$ were classified. In addition to the regular ones on the whole space,
 there is a singular solution corresponding geometrically to the cylindrical metric, and a family of \emph{singular solutions}, the Fowler solutions,
 which correspond to a family of periodic metrics on the cylinder which are isometric to the Delaunay surfaces
 (see \cite{MP1} and the references cited therein).
 
 This terminology is in analogy with the structure structure for axially symmetric constant mean curvature surfaces: in this case Delaunay surfaces
 bridge the sphere and the cylinder (see \cite{MP2} and the references cited therein).
 Furthermore, the Fowler solutions have been used as building blocks (see for example \cite{MP1}) for the construction of more general solutions
 (as well as for the constant mean curvature Delaunay surfaces).
 
 
 The above classification has been used to study the asymptotic profiles of general singular solution (see \cite{KMPS}), and solutions with singular behavior as the Fowler's ones arise in the
 study of blow-up limits for the prescribed scalar curvature problem (see \cite{L1}, \cite{L2}, \cite{CL}).
  
 The aim of this article is to prove, in analogy with the Euclidean case, the existence of a family of solutions to equation \eqref{Equazione}
 satisfying a periodicity condition with respect to dilations, that is such that $u\circ \delta_T = T^{\alfa}u$ for some $T,\alfa$ (see Section \ref{notazioni}).
 for the notation). A simple computation shows that it must necessarily hold that $\alfa=-\frac{Q-2}{2}$.
 The main result of the paper is the following.
 
 \begin{teorema}\label{Teorema}
  There exists $T_0$ such that for $T\ge T_0$ there exists a positive solution of the equation \eqref{Equazione}
  on $\H^n\setminus\{0\}$ such that
  $u\circ \delta_T = T^{-\frac{Q-2}{2}}u,$
  and $T$ is the smallest period.
 \end{teorema}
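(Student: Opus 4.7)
My plan is to implement a Lyapunov--Schmidt reduction on the compact quotient $M_T := (\H^n\setminus\{0\})/\d_T^{\Z}$, noting that $\d_T$-equivariant solutions of \eqref{Equazione} on $\H^n\setminus\{0\}$ correspond bijectively to solutions of the equation on $M_T$. Starting from the Jerison--Lee solution $U$ of \eqref{Equazione} on $\H^n$, I would form an approximate solution by periodization,
$$V_T(x) = \sum_{k\in\Z} T^{-k(Q-2)/2} U\bigl(\d_{T^{-k}} x\bigr),$$
which is $\d_T$-equivariant with exponent $-(Q-2)/2$ by construction. Since $U$ decays like $\rho^{2-Q}$ at infinity in the Heisenberg gauge and behaves as a regular bubble near $0$, the cross terms for $k\ne 0$ are small on the fundamental annulus $\{1\le\rho\le T\}$ when $T$ is large, yielding a quantitative bound on the remainder $R_T := -\D_{\H^n}V_T - V_T^{(Q+2)/(Q-2)}$ in a suitable weighted Folland--Stein or $L^q$ norm, uniformly in $T\ge T_0$.

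Next, seeking the solution in the form $u = V_T + \phi$ with $\phi$ small, I would split the problem into a projected equation
$$L_T \phi := -\D_{\H^n}\phi - \frac{Q+2}{Q-2}V_T^{4/(Q-2)}\phi = -R_T - N_T(\phi) + \sum_j \mu_j\,\zeta_j,$$
where $N_T(\phi)$ collects the higher-order nonlinear terms, $\{\zeta_j\}$ is a basis of a finite-dimensional approximate kernel $K_T$, and the multipliers $\mu_j$ are determined by the reduced (bifurcation) equation. The space $K_T$ is obtained by periodizing the tangent directions to the orbit of $U$ under the CR automorphism group (Heisenberg translations, dilations, and rotations fixing $0$) that descend to $M_T$. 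Assuming uniform invertibility of $L_T$ on $K_T^\perp$ with operator norm bounded independently of $T$, a contraction-mapping argument exploiting the smallness of $R_T$ produces a unique small $\phi_T\perp K_T$ solving the projected equation. The multipliers $\mu_j$ can then be killed by restricting to an appropriate symmetry-invariant subspace (for instance imposing invariance under the unitary rotations $U(n)\subset\mathrm{Aut}(\H^n)$), removing all modes of $K_T$ except possibly one dilation mode, which is eliminated by a Pohozaev-type identity on $M_T$ or by tuning a scaling parameter in the ansatz.

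The central obstacle is the uniform invertibility of $L_T$ on $K_T^\perp$: the subelliptic nature of $\D_{\H^n}$ rules out a direct separation-of-variables analysis on the cylinder $M_T$, as is customary in the Euclidean Fowler construction. The natural strategy is by contradiction --- a normalized sequence $\phi_{T_j}\perp K_{T_j}$ with $L_{T_j}\phi_{T_j}\to 0$ should, after rescaling around a concentration point, converge via Folland--Stein compactness to a nontrivial element of $\ker L_0$ (where $L_0 := -\D_{\H^n} - \frac{Q+2}{Q-2}U^{4/(Q-2)}$ on $\H^n$) orthogonal to all symmetry directions, contradicting the classification of $\ker L_0$. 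This classification is itself a nontrivial point in the Heisenberg setting and relies on the CR automorphism group acting transitively on the moduli space of bubbles together with the rigidity results implicit in \cite{JL}. Finally, minimality of the period $T$ follows because, as $T\to\infty$, $u_T$ is close to a single bubble per fundamental domain; a strictly smaller period $T'$ with $T=(T')^k$ and $k\ge 2$ would force $k$ bubbles per domain, contradicting this asymptotic profile.
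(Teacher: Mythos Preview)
Your outline is essentially the same Lyapunov--Schmidt scheme the paper carries out: the periodized bubble $V_T$ is exactly the paper's $\Psi_{\l,T}$ (with $\l=1$), the restriction to $U(n)$-invariant functions is the paper's space $\widetilde{X}_T$, and your contradiction/compactness argument for the uniform invertibility of $L_T$ is a standard alternative to the paper's direct comparison of $d^2\ci{J}_T(\Psi_{\l})$ with $d^2\ci{J}(\o_{\l})$ via cut-offs (both rest on the same nondegeneracy input, here from \cite{MU}). Two points deserve sharpening.

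First, your handling of the remaining dilation mode is vague. A Pohozaev-type identity on $M_T$ is not obviously available in this subelliptic setting, and ``tuning a scaling parameter'' needs to be made precise. The paper's device is clean: include the parameter $\l$ in the ansatz from the start, so that $\ci{Z}_T=\{\Psi_{\l}\}$ is a \emph{closed} curve (because $\Psi_{T\l}=\Psi_{\l}$). After solving the auxiliary equation for $w(\l)$, the reduced functional $\Phi(\l)=\ci{J}_T(\Psi_{\l}+w(\l))$ is continuous and periodic, hence has a critical point, which automatically kills the multiplier on the dilation mode. This replaces any Pohozaev argument.

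Second, you do not address positivity, which is part of the statement. The paper argues as follows: the Morse index of $\ci{J}_T$ at the solution is one (inherited from the single negative eigenvalue of $\ci{J}''(\o_{\l})$, spanned by $\o_{\l}$), and by adapting \cite{BCD} this forces $\{u\ne 0\}$ to have a single connected component modulo $\d_T$, so $u$ does not change sign; strict positivity then comes from Bony's maximum principle. Your sketch should include this, as the variational solution $u=\Psi_{\l_0}+w(\l_0)$ is not a priori signed. Your argument for minimality of the period via the single-bubble asymptotic profile is fine and is essentially what the paper means by ``follows by construction.''
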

 
 On the Euclidean space the proof of the uniqueness of such solutions relies on a result (in \cite{CGS}), proved by the moving planes method,
 stating that the positive solutions of equation \eqref{Equazione} are radially
 symmetric. In this way the construction of solutions and their classification is carried out by a standard ODE analysis. This cannot be done on $\H^n$. 
 We point out that on the Heisenberg group one cannot expect a symmetric solution, because the sublaplacian is not rotationally invariant.
 We also point out the recent results in \cite{GMM}, where solutions with singularities at higher-dimensional sets were constructed with different methods. 

 Theorem \ref{Teorema} is proved by writing equation \eqref{Equazione} as the variational equation of the functional
 $$\ci{J}_T(u)=\int_{\Omega_T}\left(|\grad_{\H^n} u|^2-\frac{1}{2^*}|u|^{2^*}\right)$$
 on a space of functions satisfying $u\circ\d_T=T^{-\frac{Q-2}{2}}u$
 (the integral is with respect to the Haar measure, see Section \ref{notazioni}).
 
 In Section 3 we will find an estimate of the Sobolev constant for periodic solution through a Hardy-Littlewood-Sobolev
 type theorem for Lorentz spaces. This will be used to carry out the estimates in the subsequent sections.
 
 In Section \ref{Costruzione} we will build a family $\ci{Z}_T$ of approximate critical points of $\ci{J}_T$ by gluing
 a sequence of suitable dilations of the global regular solution $\o_{\l}$.
 We will show that these solutions are ``almost critical'' points, in the sense that on $\ci{Z}_T$ the differential of the functional $\ci{J}_T$ is small.
 
 In Section \ref{DifferenzialeSecondo} we will prove that a non degeneracy condition holding for $\o_{\l}$ can be carried on $\Psi_{\l}$.
 
 In the final Section we will prove the existence of the desired solutions through the Lyapunov-Schmidt method, reducing the problem to the orthogonal of the tangent of the curve $\ci{Z}_T$,
 and therein applying the contraction Theorem.
 
 We believe that the construction should give perspectives for the study of more general singular solutions in the Heisenberg group, in the spirit of the cited results on the Euclidean space.

\section{Preliminaries and notation}\label{notazioni}
 In this section we recall some basic definitions and facts on the Heisenberg group, widely present in the literature. See, for example, Chapter 10 of \cite{CS}.
 
 Let us consider the Heisenberg group $\H^n=\C^n\times \R$, with the convention on the product
 $$(z_1,t_1)\per (z_2,t_2)=(z_1+z_2, t_1+t_2+2\,\g{Im}(z_1\per\overline{z_2})).$$
 Let
 $$X_i=T_i=\frac{\de}{\de x_i}+2y_i\frac{\de}{\de t},$$
 $$Y_i=T_{n+i}=\frac{\de}{\de y_i}-2x_i\frac{\de}{\de t},$$
 $$T=T_0=\frac{\de}{\de t}$$
 be the standard basis of left invariant vector fields,
 $$\grad_{\H^n} u = \sum_i X_i(u)X_i+Y_i(u)Y_i$$
 the subriemannian gradient,
 $$\div\left(\sum_i f_iX_i+g_iY_i\right)=\sum_iX_i(f_i)+Y_i(g_i)$$
 the divergence (which coincides with the divergence with respect to a Haar volume form), and
 $$\D_{\H^n}=\div\circ \grad_{\H^n} =\sum_i X_i^2+Y_i^2$$ 
 be the sublaplacian.
 There exists a constant $C=C(n)$ such that
 \begin{equation}\label{SoluzioneFondamentale}
  K(x) = \frac{C}{|x|^{Q-2}}
 \end{equation}
 is a fundamental solution of the sublaplacian.
 Let
 $$S^1(\H^n)= \{ u\in L^2(\H^n) \;|\; X_iu,Y_iu\in L^2(\H^n)\}.$$
 We endow $\H^n$ with the set of dilations
 $$\delta_{\l}(z,t)=(\l z, \l^2t)$$
 and with the homogeneous norm
 $$|(z,t)|=\left(|z|^4+t^2\right)^{1/4}.$$
 
 The Lebesgue measure $dx$ is a biinvariant Haar measure on $\H^n$ satisfying
 $$(\d_{\l})_{\#}dx=\l^Q dx;$$
 this is essentially the reason why $Q$ takes the place of the topological dimension $n$ in many analytic questions.
 
 
 Let us set $B_r=\{ |x|<r\}$ and $\Omega_T=\overline{B}_R\setminus B_1$.
 We define the Hilbert space
 $$X_T=\{u\in S^1_{\rm{loc}}(\H^n) \;|\; u\circ \delta_T = T^{-\frac{Q-2}{2}}u\}$$
 with the product
 $$\bra u, v\ket = \int_{\Omega_T}\grad_{\H^n} u\per\grad_{\H^n} v.$$
 Let $\widetilde{X}_T$ the closed subspace of $X_T$ of the functions of the form $u(|z|,t)$.
 
 It is known that the positive solutions of the equation \eqref{Equazione} are
 $$\o_{\l}=\l^{(2-Q)/2}\o\circ\delta_{\l^{-1}}$$
 and the translates thereof, where
 $$\omega(z,t)=c_0\frac{1}{\left( t^2 + (1+|z|^2)^2\right)^{(Q-2)/4}}.$$
 
 The problem is variational: the solutions in $S^1(\H^n)$ of the equation are the critical points of the functional
 $$\ci{J}(u)=\int_{\H^n}\left(|\grad_{\H^n} u|^2-\frac{1}{2^*}|u|^{2^*}\right).$$
 Analogously the solutions of the equation of the equation in $X_T$ are the critical points of the functional
 $$\ci{J}_T(u)=\int_{\Omega_T}\left(|\grad_{\H^n} u|^2-\frac{1}{2^*}|u|^{2^*}\right).$$
 It holds that
 $$d\ci{J}_T(u)[\phi]=\int_{\Omega_T}\grad_{\H^n} u\per\grad_{\H^n}\phi - u|u|^{2^*-2}\phi$$
 and that
 $$d^2\ci{J}_T(u)[\phi,\psi]= \int_{\Omega_T}\grad_{\H^n}\phi\per\grad_{\H^n}\psi -(2^*-1)|u|^{2^*-2}\phi\psi.$$
 We call $\ci{J}''_T$ the operator associated with this bilinear form in the natural way:
 $$\bra \ci{J}''_T(u)[\phi],\psi \ket = d^2\ci{J}_T(u)[\phi,\psi].$$
 
 Let us notice that, if $u\in X_T$ and $E\subseteq \gr{H}^n$ then
 \begin{equation}\label{DilatazioneLq}
  \int_{\d_r(E)}|u|^{\frac{2Q}{Q-2}} = \int_E |r^{\frac{Q-2}{2}}u\circ\d_r|^{\frac{2Q}{Q-2}}
 \end{equation}
 and
 \begin{equation}\label{DilatazioneGradiente}
  \int_{\d_r(E)}|\grad_{\H^n} u|^2 = \int_E |r^{\frac{Q-2}{2}}\grad_{\H^n}(u\circ\d_r)|^2.
 \end{equation}
 In particular, if $1\le r\le T$ then
 \begin{equation}\label{CambioDominioLq}
  \begin{aligned}
   \int_{\d_r\O_T} |u|^{\frac{2Q}{Q-2}} = \int_{\O_T\setminus\O_r}|u|^{\frac{2Q}{Q-2}}+\int_{\O_{rT}\setminus\O_T}|u|^{\frac{2Q}{Q-2}} =\\
   = \int_{\O_T\setminus\O_r}|u|^{\frac{2Q}{Q-2}}+\int_{\O_{r}}|T^{\frac{Q-2}{2}}u\circ\d_T|^{\frac{2Q}{Q-2}} = \int_{\O_T}|u|^{\frac{2Q}{Q-2}},
  \end{aligned}
 \end{equation}
 and by induction and inversion one can extend this formula to every value of $r$. Analogously
 \begin{equation}\label{CambioDominioGradiente}
  \int_{\d_r\O_T}|\grad_{\H^n} u|^2 = \int_{\O_T}|\grad_{\H^n} u|^2,
 \end{equation}
 and by polarization
 \begin{equation}\label{CambioDominioProdottoScalare}
  \int_{\d_r\O_T}\grad_{\H^n} u\per \grad_{\H^n} v = \int_{\O_T}\grad_{\H^n} u\per \grad_{\H^n} v.
 \end{equation}
 
 The following lemma shows that in integration by parts in $X_T$ boundary terms are null.
 
 \begin{lemma} 
  If $u,v\in X_T$ then
  $$\int_{\O_T}\grad_{\H^n} u\per\grad_{\H^n} v=-\int_{\O_T}\D_{\H^n} u\per v.$$
 \end{lemma}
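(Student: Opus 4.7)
The plan is to reduce the identity to a scaling argument, using the fact that the horizontal divergence of a horizontal vector field coincides with its Euclidean divergence. First I would note that since $X_i = \de/\de x_i + 2y_i\,\de/\de t$ and analogously for $Y_i$, a direct computation gives $\div_e(f X_i) = X_i f$ (and likewise for $Y_i$). Therefore, setting $\phi := v\,\grad_{\H^n} u$ regarded as a vector field on $\R^{2n+1}$, the product rule produces
$$\div_e \phi = v\,\D_{\H^n} u + \grad_{\H^n} u \per \grad_{\H^n} v,$$
so the lemma is equivalent to $\int_{\O_T} \div_e \phi = 0$.

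I would then apply the Euclidean divergence theorem separately on $B_T$ and $B_1$ and write $\int_{\O_T} \div_e \phi = \int_{B_T} \div_e \phi - \int_{B_1} \div_e \phi$. The crux is that $\div_e \phi$ obeys the correct homogeneity: differentiating $u\circ\d_T = T^{-(Q-2)/2}u$ gives $(X_i u)\circ\d_T = T^{-Q/2} X_i u$ (and similarly for $Y_i$ and iteratively for $\D_{\H^n} u$, yielding $(\D_{\H^n} u)\circ\d_T = T^{-(Q+2)/2}\D_{\H^n} u$). Combined with the analogous scaling of $v$, both summands in $\div_e \phi$ scale by $T^{-Q}$, hence $(\div_e \phi)\circ\d_T = T^{-Q}\div_e \phi$. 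Together with the Jacobian $T^Q$ of $\d_T$, the change of variables $x = \d_T y$ gives $\int_{B_T} \div_e \phi = \int_{\d_T B_1} \div_e \phi = \int_{B_1}\div_e\phi$, so the two terms cancel. This is exactly the same mechanism that underlies \eqref{CambioDominioProdottoScalare}, now applied to the full divergence.

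The main obstacle is regularity: elements of $X_T$ are only required to be in $S^1_{\rm loc}$, so $\D_{\H^n} u$ need not be a function and the Euclidean divergence theorem does not apply verbatim. I would address this by an approximation argument, replacing $u,v$ by smooth periodized mollifications $u_\e, v_\e \in X_T$ obtained by convolving with a smooth kernel and re-averaging by the dilation action, then passing to the limit; in the generic application of this lemma (when $u$ is a solution of \eqref{Equazione} or enjoys the higher regularity coming from subelliptic estimates), this step is trivial and the direct computation above suffices.
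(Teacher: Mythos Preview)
Your scaling identity $(\div_e\phi)\circ\d_T = T^{-Q}\div_e\phi$ is correct and is the heart of the matter, but the step $\int_{\O_T}\div_e\phi = \int_{B_T}\div_e\phi - \int_{B_1}\div_e\phi$ followed by a volume change of variables is not legitimate: any nonzero element of $X_T$ is singular at the origin (the periodicity gives $u(\d_{T^{-k}}x)=T^{k(Q-2)/2}u(x)\to\infty$ as $k\to\infty$), so $\div_e\phi$ is not integrable over $B_1$ and both terms on the right diverge. Your mollification does not cure this, because a function continuous at $0$ and satisfying the $X_T$ scaling must vanish identically; the ``periodized mollifications'' $u_\e,v_\e\in X_T$ you propose are therefore still singular at the origin, and the split over $B_T$ and $B_1$ remains meaningless.

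The repair is straightforward: apply the divergence theorem on the annulus $\O_T$ itself to obtain
$\int_{\O_T}\div_e\phi = \int_{\de B_T}\iota_\phi\,dV - \int_{\de B_1}\iota_\phi\,dV$,
and then observe from your own scaling computations that the flux form $\iota_\phi\,dV$ is $\d_T$-invariant (the vector field $\phi=v\grad_{\H^n}u$ pulls back by $T^{-Q}$ while $dV$ pulls back by $T^{Q}$), so the two boundary terms cancel. This stays entirely away from the origin.

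The paper's proof takes a different route that never mentions boundary fluxes: it splits $v=v_1+v_2$ by a partition of unity so that each piece is compactly supported in the interior of some dilated annulus $\d_r\O_T$, integrates by parts there with no boundary contribution, and uses the shift identity \eqref{CambioDominioProdottoScalare} to bring each piece back to $\O_T$. That argument works purely at the level of $S^1_{\rm loc}$ with no need for normal traces on $\de B_1$; your corrected boundary-flux argument is shorter and more transparent, but it requires enough regularity of $\phi$ up to the boundary for the flux to make sense.
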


 \begin{proof}
  Let us write $v=v_1+v_2$ with $u_1,u_2\in X_T$, $\supp u_1\cap \O_T \subset B_{7(T+1)/8}\setminus B_{(T+1)/8}$ and
  $\supp u_2\cap\d_{(T+1)/2}\O_T \subset B_{(T+1)^2/8}\setminus B_{3(T+1)/4}$ (this can be carried out through a partition of unity).
  
  Then, using formula \eqref{CambioDominioProdottoScalare},
  $$\int_{\O_T}\grad_{\H^n} u\per\grad_{\H^n} v= \int_{\O_T}\grad_{\H^n} u\per\grad_{\H^n} v_1 + \int_{\O_T}\grad_{\H^n} u\per\grad_{\H^n} v_2 =$$
  $$= -\int_{\O_T}\D_{\H^n} u\per v_1 + \int_{\d_{(T+2)/2}\O_T}\grad_{\H^n} u\per\grad_{\H^n} v_2 =$$
  $$= -\int_{\O_T}\D_{\H^n} u\per v_1 - \int_{\d_{(T+2)/2}\O_T}\D_{\H^n} u\per v_2=$$
  $$ = -\int_{\O_T}\D_{\H^n} u\per v_1 - \int_{\O_T}\D_{\H^n} u\per v_2 = -\int_{\O_T}\D_{\H^n} u\per v.$$
 \end{proof}

 We will need to restrict ourselves to solutions in $\widetilde{X}_T$. In order to do this, we observe that, under the identification
 $\H^n=\R^{2n}\times\R$, the functional $\ci{J}_T$ is invariant by the group of transformations of the form $(z,t)\mapsto(Az,t)$
 with $A\in O(\R^{2n})\cap Sp(\R^{2n})$. In fact it is known that if $A=(a_{ij})\in Sp(\R^{2n})$ then this transformation is a group automorfism of
 $\H^n$ (see \cite{Folland}, Chapter 1, Section 2), and so it maps the fields $T_i$ into the fields $\sum_j a_{ij}T_j$.
 So, using the fact that $A\in O(\R^{2n})$, it is easy to verify that $\ci{J}_T$ is invariant by this group.
 
 Furthermore, under the canonical identification of $\R^{2n}$ with $\C^n$, $O(\R^{2n})\cap Sp(\R^{2n})=U(\C^n)$ (\cite{Folland}, Proposition 4.6).
 
 Since $U(n)$ acts transitively on the unit sphere of $\C^n$, $\widetilde{X}_T$ is the set of the functions in $X_T$ invariant under the 
 transformations of this form, and so, by
 Palais' criticality principle \cite{Palais}, the critical points of the restriction of $\ci{J}_T$ to it are critical points in all of $X_T$.
 
 In the sequel we will need also a particular vector field that plays an important role in $\H^n$ (and more in general in homogeneous groups), the generator of the dilations.
 It is characterized by the equation
 $$ \left.\frac{d}{d\l}\right|_{\l=1}(u\circ\d_{\l})=Zu$$
 for every $u\in\ci{C}^1(\H^n)$. An explicit expression for it is
 $$Z= \sum_{i=1}^n x_i\frac{\de}{\de x_i} + y_i\frac{\de}{\de y_i} + 2t\frac{\de}{\de t}.$$
 It is easy to verify that
 \begin{equation}\label{GeneratoreDilatazioni}
   \l\frac{d}{d\l}(u\circ\d_{\l}) = Z(u\circ\d_{\l}) = (Zu)\circ\d_{\l}.
 \end{equation}
 Using this formula, it is easy to prove that a function $u$ is homogeneous of degree $\alfa$ if and only if $Zu=\alfa u$
 (an extension to $\H^n$ of Euler's theorem).
 
 Furthermore it holds that $[X_i,Z]=X_i$ and $[Y_i,Z]=Y_i$, and so $[\grad_{\H^n}, Z]=\grad_{\H^n} $.
 
 \subsection{Lorentz spaces}
 In Section 3, to overcome the non integrability of the functions in $X_T$ in the whole space, will need to use the Lorentz spaces, which we recall briefly.

 Given a $\sigma$-finite measure space $(X,\mi)$ and $1\le p<\infty$, $1\le q\le\infty$, the Lorentz quasinorm is defined as
 $$\N{u}_{L^{p,q}(X)} = p^{1/q} \N{ \l \mi\{|u|>\l\}^{1/p} }_{L^q(dt/t)}.$$
 Furthermore one defines $\N{u}_{L^{\infty,\infty}(X)}=\N{u}_{L^{\infty}(X)}$.
 The Lorentz space $L^{p,q}(X)$ is the set of functions such that this quantity is finite. When $p=q$, $\N{u}_{L^{p,p}}=\N{u}_{L^p}$, while when $q=\infty$, $L^{p,\infty}$ coincides with weak $L^p$.

 We will need the following generalization of the Young inequality, which sometimes is referred in the literature as Young-O’Neil inequality.
 It can be deduced applying Theorem 2.6 in \cite{ON} (with the corrections in \cite{Yap}) and Theorem 1.2.12, Remark 1.2.11 in \cite{Gr}.

 \begin{teorema}\label{YoungONeil}
  If $1<p,p_1,p_2<\infty$, $1\le q,q_1,q_2\le\infty$ are such that
  $$ \frac{1}{p_1} + \frac{1}{p_2} = 1 + \frac{1}{p} \;\;\; \text{and} \;\;\; \frac{1}{q_1} + \frac{1}{q_2} = \frac{1}{q}$$
  then there exists $C$ such that for every $f\in L^{p_1,q_1}(\H^n)$, $g\in L^{p_2,q_2}(\H^n)$ it holds
  $$ \N{fg}_{L^{p,q}(\H^n)} \le C \N{f}_{L^{p_1,q_1}(\H^n)} \N{g}_{L^{p_2,q_2}(\H^n)}.$$
 \end{teorema}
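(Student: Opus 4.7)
The exponent relation $\frac{1}{p_1}+\frac{1}{p_2}=1+\frac{1}{p}$ is the classical Young convolution relation rather than the H\"older one, so I interpret the ``$fg$'' in the statement as the group convolution $f\ast g$ on $\H^n$ (with respect to the biinvariant Haar measure $dx$). My plan is the usual two-step passage: reduce everything to the decreasing rearrangement via O'Neil's pointwise estimate, then close the argument in the one-dimensional setting on $((0,\infty),dt/t)$ through Hardy-type inequalities. The only place where the Heisenberg group enters is the existence of a Haar measure that makes convolution well defined; all subsequent manipulations are purely scalar.

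First I would recall that $\H^n$ is a locally compact unimodular group, so the convolution $f\ast g(x)=\int f(y)g(y^{-1}x)\,dy$ is defined and satisfies $(f\ast g)^\ast\le(|f|\ast|g|)^\ast$, where $h^\ast$ denotes the decreasing rearrangement with respect to $dx$. One may therefore assume $f,g\ge 0$. The key input is O'Neil's pointwise bound (Theorem 2.6 of \cite{ON}, as corrected in \cite{Yap}):
$$(f\ast g)^{\ast\ast}(t)\;\le\;t\,f^{\ast\ast}(t)\,g^{\ast\ast}(t)\;+\;\int_t^{\infty}f^{\ast}(s)\,g^{\ast}(s)\,ds,$$
where $h^{\ast\ast}(t)=t^{-1}\int_0^t h^\ast(s)\,ds$. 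This reduces the $\H^n$-inequality to a purely one-dimensional statement about the rearrangements.

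Next I would convert the quasinorm $\N{\cdot}_{L^{p,q}}$ defined in the paper to the equivalent norm $\|t^{1/p}h^{\ast\ast}(t)\|_{L^q(dt/t)}$; this equivalence holds precisely for $p>1$ and is the content of Theorem 1.2.12 and Remark 1.2.11 in \cite{Gr}. With this choice the left-hand side of the theorem becomes a weighted $L^q(dt/t)$-norm of $(f\ast g)^{\ast\ast}$. Insert the O'Neil bound and split into two terms. For the first term $t\,f^{\ast\ast}(t)g^{\ast\ast}(t)$, the hypothesis $\frac{1}{p_1}+\frac{1}{p_2}=1+\frac{1}{p}$ means that the weight factorises as $t^{1/p}\cdot t = t^{1/p_1}\cdot t^{1/p_2}$, so a H\"older inequality in $L^q(dt/t)$ using $\frac{1}{q_1}+\frac{1}{q_2}=\frac{1}{q}$ reproduces the right-hand side. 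For the second term $\int_t^{\infty}f^\ast(s)g^\ast(s)\,ds$, one applies a weighted Hardy inequality of the form
$$\Bigl\|\int_t^{\infty}h(s)\,ds\Bigr\|_{L^q(t^{q/p-1}dt)}\;\le\;C\,\bigl\|t\,h(t)\bigr\|_{L^q(t^{q/p-1}dt)},$$
valid precisely when the weight exponent $q/p-1>-1$, i.e.\ $p<\infty$; a further H\"older splits the resulting $\|t\,f^\ast g^\ast\|$ into the two Lorentz factors.

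The main obstacle is the bookkeeping around the endpoint cases $q_i\in\{1,\infty\}$ and, more substantively, the verification that the Hardy-type inequality is available: this is exactly what forces the two-sided restriction $1<p,p_1,p_2<\infty$ and is also the reason the equivalence between the $L^{p,q}$-quasinorm and the $f^{\ast\ast}$-norm is needed (the $f^{\ast\ast}$ version handles both integrability at the origin and the sub-additivity that the raw $f^{\ast}$ quasinorm lacks). With these estimates in place the assembly is routine: summing the two contributions yields the desired bound $\N{f\ast g}_{L^{p,q}(\H^n)}\le C\N{f}_{L^{p_1,q_1}(\H^n)}\N{g}_{L^{p_2,q_2}(\H^n)}$ with a constant depending only on the six exponents.
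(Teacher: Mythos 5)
Your reading of ``$fg$'' as the convolution $f\ast g$ is the right one: the exponent relation $\frac{1}{p_1}+\frac{1}{p_2}=1+\frac{1}{p}$ and the way the theorem is later applied (boundedness of $f\mapsto f\ast\grad_{\H^n}K$ in Proposition \ref{SobolevLpDebole}) confirm that the product in the statement is a typo for convolution. The paper gives no proof of this theorem --- it only cites O'Neil's Theorem 2.6 (with Yap's corrections) and Grafakos --- and what you have written is precisely the standard argument underlying those citations: O'Neil's pointwise bound $(f\ast g)^{\ast\ast}(t)\le t f^{\ast\ast}(t)g^{\ast\ast}(t)+\int_t^\infty f^\ast g^\ast\,ds$, followed by H\"older in $L^q(dt/t)$ on the first term and a weighted Hardy inequality on the second, with the restrictions $1<p_i<\infty$ entering exactly where you say (the $h^{\ast}$ versus $h^{\ast\ast}$ equivalence and the Hardy weight condition). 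The argument is correct; the only point worth making explicit is that O'Neil's pointwise estimate, proved in \cite{ON} for $\R^n$, carries over verbatim to $\H^n$ because its proof uses only that the group is locally compact and unimodular (so that $\N{\chi_E\ast\chi_F}_{L^\infty}\le\min(|E|,|F|)$ and $\N{\chi_E\ast\chi_F}_{L^1}\le|E||F|$ hold for the Haar measure), which is the role of the Grafakos reference in the paper's citation.
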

 
 \subsection{Basic definitions on CR geometry}
 For convenience of the reader, we recall the basic definitions about CR manifolds, also if we will not use them.
 The reader can find more on the topic in \cite{Bog}, \cite{DTom}.
 
 A CR manifold is a real smooth manifold $M$ endowed with a subbundle $\ci{H}$ of the complexified tangent bundle of $M$, $T^{\C}M$, such that $\ci{H}\cap\overline{\ci{H}} = \{0\}$
 and $[\ci{H},\ci{H}]\subseteq\ci{H}$. We will assume $M$ to be of hypersurface type, that is that $\dim M=2n+1$ and that $\dim\ci{H}=n$.
 There exists a non-zero real differential form $\theta$ that is zero on $\g{Re}(\ci{H}\oplus\overline{\ci{H}})$;
 it is unique up to scalar multiple by a function.
 Such a form is called pseudohermitian structure. On a pseudohermitian manifold, the Levi form is defined as the 2-form on $\ci{H}$
 $L_{\theta}(V,W)=-id\theta(V,\overline{W})=id\theta([V,\overline{W}])$. A CR manifold is said to be pseudoconvex if it admits a positive definite
 Levi form (this implies every Levi form to be definite).
 
 The Heisenberg group is the simplest pseudoconvex CR manifold, if endowed with the bundle
 $\ci{H}=\operatorname{span}(Z_1,\ecc,Z_n)$ with $Z_j=\frac{1}{2}(X_j-iY_j)$.
 
 On a nondegenerate pseudohermitian manifold one can define a connection, the Tanaka-Webster connection. This allows to define
 curvature operators in an analogous manner as in Riemannian geometry: the pseudohermitian curvature tensor is the curvature of the
 Tanaka-Webster connection, the Ricci tensor is
 $$\operatorname{Ric}(X,Y)=\operatorname{trace}(Z\mapsto R(Z,X)Y),$$
 and the Webster scalar curvature is the trace of the Ricci tensor with respect to the Levi form.
 
 Being a pseudohermitian structure defined only up to a conformal factor on a CR manifold, in CR geometry the Yamabe problem is even more more
 natural than in Riemannian geometry. If $\widetilde{\theta}=u^{2/n}\theta$, the transformation law of the Webster curvature is
 $$\widetilde{W}=u^{-1-2/n}\left(\frac{2n+2}{n}\D_bu + Wu\right),$$
 where $\D_b$ is the sublaplacian, which can be defined in a similar way as the Heisenberg group.
 So the Yamabe problem takes to the equation
 $$\frac{2n+2}{n}\D_bu + Wu=\l u^{1+2/n}.$$
 Since the Heisenberg group has zero Webster curvature, and since the pseudohermitian sublaplacian coincides with the sublaplacian defined
 formerly, the Yamabe problem, up to an inessential constant, is equivalent to find positive solution to equation \ref{Equazione}.
 
 The solution of this case plays in the solution in the general case the same role that the solution on $\R^n$ plays in the solution
 of the general Riemannian case.
 
 \section{Estimate of the Sobolev constant on $X_T$}
 
 In order to carry out the estimates in the next Sections, we will need an explicit bound on the  Sobolev constant on $X_T$. We will achieve this relating the $L^p$ norm on $\O_T$ and the $L^{p,\infty}$ norm on the whole space.

 \begin{prop}\label{NormeEquivalenti}
  If $f$ is an $L^p_{loc}$ function on $\H^n\setminus\{0\}$ such that $f\circ\d_T=T^{-\alfa}f$ and $\alfa p=Q$ then
  $$ \left(\frac{T^Q-1}{T^Q}\right)^{1/p} \N{u}_{L^{p,\infty}(\H^n)} \le C_2 \N{u}_{L^p(\O_T)} \le Q^{1/p}(\log T)^{1/p} \N{u}_{L^{p,\infty}(\H^n)}.$$
 \end{prop}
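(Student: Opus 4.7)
The plan is to express both norms in terms of the distribution function of $|u|$ restricted to the fundamental annulus $\O_T$, and then exploit the scaling hypothesis $\alpha p=Q$ via the tiling $\H^n\setminus\{0\}=\bigsqcup_{k\in\Z}\d_{T^k}\O_T$. Set $D(\mu):=|\{|u|>\mu\}\cap\O_T|$ and $\phi(\mu):=\mu^p D(\mu)$. Since $\{|u|>\lambda\}\cap\d_{T^k}\O_T=\d_{T^k}(\{|u|>T^{k\alpha}\lambda\}\cap\O_T)$, the Jacobian $(\d_{T^k})_\#dx=T^{kQ}dx$ combined with $\alpha p=Q$ yields
$$\lambda^p\,|\{|u|>\lambda\}|=\sum_{k\in\Z}T^{kQ}D(T^{k\alpha}\lambda)\lambda^p=\sum_{k\in\Z}\phi(T^{k\alpha}\lambda),$$
so $\N{u}_{L^{p,\infty}(\H^n)}^p=\sup_{\lambda>0}\sum_k\phi(T^{k\alpha}\lambda)$, while the layer-cake formula gives $\N{u}_{L^p(\O_T)}^p=p\int_0^\infty\phi(\mu)\,d\mu/\mu$.

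For the right-hand inequality, I would slice $(0,\infty)=\bigsqcup_k[T^{k\alpha},T^{(k+1)\alpha})$ and substitute $\mu=T^{k\alpha}\lambda$ to rewrite
$$\int_0^\infty\phi(\mu)\frac{d\mu}{\mu}=\int_1^{T^\alpha}\sum_{k\in\Z}\phi(T^{k\alpha}\lambda)\frac{d\lambda}{\lambda}\le\N{u}_{L^{p,\infty}(\H^n)}^p\cdot\alpha\log T.$$
Multiplying by $p$ and using $\alpha p=Q$ delivers $\N{u}_{L^p(\O_T)}^p\le Q\log T\cdot\N{u}_{L^{p,\infty}(\H^n)}^p$, which is the right inequality after taking $p$-th roots.

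The left inequality is the real content, since it asks us to control a pointwise sum by an integral. Here $\phi$ itself need not be monotonic, so I would use monotonicity of $D$ instead: for each $k\in\Z$,
$$\int_{T^{k\alpha}\lambda}^{T^{(k+1)\alpha}\lambda}\phi(\mu)\frac{d\mu}{\mu}=\int_{T^{k\alpha}\lambda}^{T^{(k+1)\alpha}\lambda}\mu^{p-1}D(\mu)\,d\mu\ge D(T^{(k+1)\alpha}\lambda)\cdot\frac{(T^{(k+1)\alpha}\lambda)^p-(T^{k\alpha}\lambda)^p}{p}=\frac{1-T^{-Q}}{p}\phi(T^{(k+1)\alpha}\lambda).$$
Summing over $k\in\Z$ telescopes the left-hand side to $\int_0^\infty\phi(\mu)\,d\mu/\mu=\N{u}_{L^p(\O_T)}^p/p$ and, after reindexing, the right-hand side to $\sum_j\phi(T^{j\alpha}\lambda)=\lambda^p|\{|u|>\lambda\}|$. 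Thus $(1-T^{-Q})\lambda^p|\{|u|>\lambda\}|\le\N{u}_{L^p(\O_T)}^p$; taking the supremum in $\lambda$ and extracting a $p$-th root gives the left inequality with $C_2=1$.

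The mildly delicate step is precisely this last one: one has to use monotonicity of $D$ rather than of $\phi$, and choose the right endpoint of the dyadic interval $[T^{k\alpha}\lambda,T^{(k+1)\alpha}\lambda]$ so that after summing every term of $\sum_k\phi(T^{k\alpha}\lambda)$ is captured and the factor produced is exactly $(1-T^{-Q})/p$. Everything else is a Fubini/change-of-variables rearrangement that is essentially forced by the scaling relation $\alpha p=Q$.
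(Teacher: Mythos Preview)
Your argument is correct and matches the paper's closely: both proofs introduce the local distribution function $D(\mu)=f(\mu)=|\{|u|>\mu\}\cap\O_T|$, relate the global distribution to $\sum_k T^{kQ}D(T^{k\alpha}\lambda)$ via the tiling and the scaling identity $\alpha p=Q$, and obtain the left-hand inequality by bounding $D$ from below by its value at the right endpoint of each dyadic block $[T^{(k-1)\alpha}\lambda,T^{k\alpha}\lambda]$.

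The only substantive difference is in the right-hand inequality. The paper refines the partition to mesh $T^{\alpha/N}$, uses monotonicity of $D$ to get $\N{u}_{L^p(\O_T)}^p\le N(T^{Q/N}-1)\N{u}_{L^{p,\infty}}^p$, and then sends $N\to\infty$. Your Fubini/change-of-variables identity
\[
\int_0^{\infty}\phi(\mu)\,\frac{d\mu}{\mu}=\int_1^{T^{\alpha}}\sum_{k\in\Z}\phi(T^{k\alpha}\lambda)\,\frac{d\lambda}{\lambda}
\]
reaches the same constant $Q\log T$ in one line, without any discretization or limit. This is a genuine simplification: it replaces an approximation argument by an exact rearrangement, and it makes transparent that the periodicity of $\lambda\mapsto\sum_k\phi(T^{k\alpha}\lambda)$ under $\lambda\mapsto T^{\alpha}\lambda$ is what produces the $\log T$ factor. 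The paper's route, on the other hand, has the minor advantage of making the role of the monotonicity of $D$ explicit in both inequalities.
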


  \begin{proof}
  Let us call $f(\l)=\mi\{x\in \O_T \;|\; u(x)>\l\}$ and $g(t)=\mi\{x\in \H^n \;|\; u(x)>\l\}$. Then it holds that
  $$g(\l) = \sum_{k\in\Z} T^{Qk} f(\l T^{\alfa}).$$
  Therefore for every $\l>0$
  $$\N{u}_{L^p(\O_T)}^p = p\int_0^{\infty}\xi^{p-1}f(\xi)d\xi =  p\sum_{k\in \Z} \int_{\l T^{\alfa (k-1)}}^{\l T^{\alfa k}} \xi^{p-1} f(\xi) d\xi \ge$$
  $$\ge p\sum_{k\in \Z} f(\l T^{\alfa k}) \int_{\l T^{\alfa (k-1)}}^{\l T^{\alfa k}} \xi^{p-1} d\xi = \frac{T^Q-1}{T^Q} \l^p \sum_{k\in \Z} T^{Qk} f(\l T^{\alfa k}) =$$
  $$= \frac{T^Q-1}{T^Q} \l^p g(\l).$$
  Taking the supremum with respect to $\l$ we get the first inequality.

  For the other one, let us pick an integer $N>0$ and write
  $$\N{u}_{L^p(\O_T)}^p = p\int_0^{\infty}\xi^{p-1}f(\xi)d\xi =  p\sum_{k\in \Z} \int_{ T^{\alfa k/N}}^{ T^{\alfa (k+1)/N}} \xi^{p-1} f(\xi) d\xi \le$$
  $$ \le p\sum_{k\in \Z} f(T^{\alfa k/N}) \int_{ T^{\alfa k/N}}^{ T^{\alfa (k+1)/N}} \xi^{p-1} d\xi =$$
  $$= \sum_{m=1}^N \sum_{j\in\Z} (T^{\alfa p/N}-1) T^{\alfa pj} T^{\alfa pm/N}f(T^{\alfa j}T^{\alfa m/N}) = $$
  $$ = (T^{Q/N}-1) \sum_{m=1}^N T^{Qm/N} \sum_{j\in\Z} T^{Qj} f(T^{\alfa j}T^{\alfa m/N}) = $$
  $$ =  (T^{Q/N}-1) \sum_{m=1}^N T^{Qm/N} g(T^{\alfa m/N}) \le N(T^{Q/N}-1) \N{u}_{L^{p,\infty}}^p .$$
  Taking the limit for $N\to\infty$ we get the second inequality.
 \end{proof}

  Using the Theorem \ref{YoungONeil} we can prove a Sobolev type inequality for weak $L^p$ spaces.

 \begin{prop}\label{SobolevLpDebole}
  There exists a constant $C$ such that for every function $u\in L^{2,\infty}(\H^n)$ such that $\grad u\in L^{2,\infty}(\H^n)$ verifies
  $$ \N{u}_{L^{\frac{2Q}{Q-2},\infty}} \le C \N{\grad u}_{L^{2,\infty}}.$$
 \end{prop}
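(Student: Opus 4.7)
The plan is to reduce the inequality to the Young-O'Neil inequality of Theorem \ref{YoungONeil} via a representation of $u$ as convolution of the horizontal gradient with a kernel homogeneous of degree $-(Q-1)$. For $u\in\ci{C}^{\infty}_c(\H^n)$ the fundamental solution \eqref{SoluzioneFondamentale} gives $u = K*(-\D_{\H^n} u)$, and integrating by parts in each term $K * X_j^2u$, $K * Y_j^2u$ against the left-invariant vector fields (which are formally anti-selfadjoint with respect to the Haar measure) produces a representation
$$u(x) = \sum_{j=1}^{n}\bigl(K_j * X_j u\bigr)(x) + \bigl(K_{n+j} * Y_j u\bigr)(x),$$
where each $K_i$ is, up to sign, a left-invariant horizontal derivative of $K$, hence smooth away from the origin and homogeneous of degree $-(Q-1)$ with respect to $\d_{\l}$.

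Since a function homogeneous of degree $-(Q-1)$ and bounded on the unit sphere satisfies $\mi\{|K_i|>\l\}\sim\l^{-Q/(Q-1)}$, one has $K_i\in L^{Q/(Q-1),\infty}(\H^n)$. Now apply Theorem \ref{YoungONeil} to each term with exponents $p_1=Q/(Q-1)$, $q_1=\infty$, $p_2=2$, $q_2=\infty$: the compatibility conditions
$$\frac{1}{p_1}+\frac{1}{p_2} = \frac{Q-1}{Q}+\frac12 = 1+\frac{Q-2}{2Q}, \qquad \frac{1}{q_1}+\frac{1}{q_2}=0,$$
give $p=2Q/(Q-2)$ and $q=\infty$, so
$$\N{K_i * Z_i u}_{L^{\frac{2Q}{Q-2},\infty}}\le C\N{K_i}_{L^{Q/(Q-1),\infty}}\N{Z_iu}_{L^{2,\infty}},$$
and summing over $i$ yields the claimed inequality for $u\in\ci{C}^{\infty}_c(\H^n)$.

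To pass from smooth, compactly supported $u$ to a general $u\in L^{2,\infty}(\H^n)$ with $\grad_{\H^n} u\in L^{2,\infty}(\H^n)$, I would regularize by convolution with a smooth mollifier $\rho_{\e}$ (possibly combined with a smooth cutoff $\chi_R$ depending on $|x|$ to obtain compact support). The convolution $u_{\e,R}=(u\chi_R)*\rho_{\e}$ is smooth and compactly supported, $\grad_{\H^n} u_{\e,R}\to\grad_{\H^n} u$ in the sense of distributions as $\e\to 0,\;R\to\infty$, and one more application of Theorem \ref{YoungONeil} (with $p_2=1$) shows that the $L^{2,\infty}$-quasinorms of $u_{\e,R}$ and $\grad_{\H^n} u_{\e,R}$ are uniformly bounded in terms of those of $u$ and $\grad_{\H^n} u$. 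Passing to the limit, using pointwise a.e.\ convergence and Fatou's property of the weak $L^p$ quasinorm, concludes the proof.

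I expect the main technical obstacle to be this last approximation step: since $L^{2,\infty}$ is not separable and smooth functions are not dense in it, care is required to ensure that the regularized sequence genuinely has the right gradient in the limit and that the weak type bound is preserved; the cleanest way is to track everything through the convolution representation rather than trying to approximate $u$ in the weak $L^2$ norm directly. The representation formula itself is standard on $\H^n$ and does not require anything beyond formal integration by parts against $\ci{C}^{\infty}_c$ test functions.
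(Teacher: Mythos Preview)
Your core strategy coincides with the paper's: represent $u$ via the fundamental solution as a convolution of $\grad_{\H^n} u$ against a kernel in $L^{Q/(Q-1),\infty}$, then apply Theorem~\ref{YoungONeil}. The only substantive difference is in how the density obstacle---which you correctly single out---is handled. The paper does not attempt to approximate $u$ in the weak space at all; instead it splits $u=u_1+u_2$ with $u_1 = u\chi_{\{u\le 1\}} + \chi_{\{u>1\}}$ and $u_2=(u-1)\chi_{\{u>1\}}$, observes that $u_1\in S^p(\H^n)$ for some $p>2$ and $u_2\in S^q(\H^n)$ for some $q<2$ (genuine Lebesgue--Sobolev spaces, where $\ci C^\infty_c$ \emph{is} dense), extends the representation formula to each piece by continuity of the convolution operator on those spaces, and only then sums and applies Young--O'Neil once more. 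This sidesteps mollifiers, cutoffs, and Fatou arguments entirely, and gives the representation $u=\grad_{\H^n}u*\grad_{\H^n}K$ directly for the original $u$.

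Your route, as written, leaves a small residue: after cutoff and mollification the bound you obtain for $\N{\grad_{\H^n} u_{\e,R}}_{L^{2,\infty}}$ involves both $\N{\grad_{\H^n} u}_{L^{2,\infty}}$ and $\N{u}_{L^{2,\infty}}$ (from the term $u\,\grad_{\H^n}\chi_R$), so the limiting inequality carries an extra $\N{u}_{L^{2,\infty}}$ on the right unless you say more. It can be removed either by dilation (apply the inequality to $u\circ\d_\l$ and let $\l\to\infty$) or by noting that $\N{u\,\grad_{\H^n}\chi_R}_{L^{2,\infty}}\lesssim R^{-1}\N{u}_{L^{2,\infty}}\to 0$, but this step should be made explicit. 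Note also that your appeal to Theorem~\ref{YoungONeil} with $p_2=1$ lies outside its stated range $1<p_2<\infty$; the $L^1\ast L^{p,\infty}\to L^{p,\infty}$ bound is of course true but requires a separate (elementary) justification. The paper's splitting trick is worth knowing precisely because it dissolves all of these technicalities at once.
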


 \begin{proof}
  Let $E= { u> 1}$, $E^c = \H^n\setminus E$, $u_1 = u\chi_{E^c}+\chi_E$ and $u_2=(u-1)\chi_E$, so that $u=u_1+u_2$.
  It is standard to prove that $u_1$ and $u_2$ have weak subriemannian gradient and that $\grad_{\H} u_1 = (\grad_{\H} u)\chi_{E^c}$,
  $\grad_{\H} u_2 = (\grad_{\H} u)\chi_{E}$ (the proof is the same as on $\R^n$).
  It is easy to prove that $u_1 \in S^p(\H^n)$ for $p>2$ and that $u_2\in S^q(\H^n)$ for $q<2$.
  If $\phi\in\ci{C}^{\infty}_c(\H^n)$ it holds that
  $$\phi(x)= (\phi*\delta)(x)= (\phi*(-\D_{\H^n}K))(x) =$$
  \begin{equation}\label{Nucleo}
  = \int_{\H^n}(\grad_{\H^n}\phi)(xy^{-1}) * (\grad_{\H^n}K)(y)dy := (\grad_{\H^n}\phi *\grad_{\H^n}K) (x)
  \end{equation}
  Formula \eqref{SoluzioneFondamentale} implies that $\grad_{\H^n}K\in L^{\frac{Q}{Q-1},\infty}$, and so,
  by Theorem \ref{YoungONeil}, the operator $f\mapsto f*\grad_{\H^n}K$ is bounded from $L^p$ and $L^q$ to some other Lebesgue spaces.
  Therefore, using the density of $\ci{C}^{\infty}_c$ in $S^p(\H^n)$ for $1\le p<\infty$, formula \eqref{Nucleo} holds almost everywhere
  for functions in these spaces, and so it holds for $u_1$ and $u_2$. By summing one obtains that
  $$ u = \grad_{\H^n}u * \grad_{\H^n}K.$$
  The thesis follows applying Theorem \ref{YoungONeil} once more.
 \end{proof}

 We point out that in the proof of the last Proposition the splitting of $u$ in two pieces belonging to some $L^p$ space was necessary
 because $\ci{C}^{\infty}_c$ functions are not dense in the weak $L^p$ spaces.
 
 Combining Propositions \ref{NormeEquivalenti} and \ref{SobolevLpDebole} we get the following Sobolev theorem for $X_T$ spaces
 with an explicit constant.
 
 \begin{prop}\label{lemmaSobolev}
  There exist a constant $C$ independent by $T$ such that for every $u\in X_T$
  $$ \N{u}_{L^{\frac{2Q}{Q-2}}(\O_T)} \le C (\log T)^{\frac{Q-2}{2Q}} \left(\frac{T^Q}{T^Q-1}\right)^{1/2} \N{u}_{X_T}.$$
 \end{prop}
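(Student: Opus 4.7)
The plan is to chain Propositions \ref{NormeEquivalenti} and \ref{SobolevLpDebole}: I pass $\N{u}_{L^{2^*}(\O_T)}$ first to the weak-$L^{2^*}$ norm on all of $\H^n$, then apply the weak-space Sobolev estimate to reduce to a gradient bound in $L^{2,\infty}(\H^n)$, then descend back to the strong $L^2$ norm on $\O_T$. The three conversions produce exactly the three constants that combine to give the stated bound, with $\N{u}_{X_T} = \N{\grad_{\H^n} u}_{L^2(\O_T)}$ closing the loop.

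For the first conversion I apply the upper inequality of Proposition \ref{NormeEquivalenti} with $p = 2^* = 2Q/(Q-2)$ and $\alfa = (Q-2)/2$; the defining identity $u\circ\d_T = T^{-(Q-2)/2}u$ of $X_T$ gives $\alfa p = Q$, yielding the factor $(\log T)^{(Q-2)/(2Q)}$. For the last conversion I apply the lower inequality of Proposition \ref{NormeEquivalenti} to $|\grad_{\H^n} u|$ with $p = 2$ and $\alfa = Q/2$. Here I need that $|\grad_{\H^n} u|$ is periodic with this scaling exponent: since the fields $X_i, Y_i$ are $\d_T$-homogeneous of degree one (equivalently, from the commutator identity $[\grad_{\H^n}, Z] = \grad_{\H^n}$ recorded above), a short chain-rule computation gives $(\grad_{\H^n} u)\circ\d_T = T^{-Q/2}\grad_{\H^n} u$, so $\alfa p = Q$ once more, producing the factor $(T^Q/(T^Q-1))^{1/2}$. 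Between these two bookends sits Proposition \ref{SobolevLpDebole}, and absorbing the numerical factor $Q^{(Q-2)/(2Q)}$ into the universal constant $C$ delivers the claimed inequality.

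The main obstacle I expect is that Proposition \ref{SobolevLpDebole} requires not only $\grad_{\H^n} u \in L^{2,\infty}(\H^n)$ but also $u \in L^{2,\infty}(\H^n)$, which a generic $u \in X_T$ does not satisfy: the periodicity exponent $(Q-2)/2$ matches $p = 2^*$, not $p = 2$, so $\l^2\mu\{|u|>\l\}$ diverges as $\l\to 0^+$. I would sidestep this by applying Proposition \ref{SobolevLpDebole} not to $u$ itself but to a compactly supported truncation $u_N = u\,\chi_N$, where $\chi_N$ is a smooth cutoff equal to $1$ on $\bigcup_{|k|\le N}\d_T^k\O_T$ and vanishing off a slightly larger set. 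Such a $u_N$ lies in $L^{2,\infty}(\H^n)$ (in fact in every Lebesgue space), its gradient differs from $\chi_N\grad_{\H^n} u$ by a cutoff error supported on two transition shells whose $L^{2,\infty}$ mass is controlled by $\N{\grad_{\H^n}u}_{L^2(\O_T)}$ uniformly in $N$, and $u_N \nearrow |u|$ pointwise. Passing $N\to\infty$ and then applying the upper inequality of Proposition \ref{NormeEquivalenti} on the left to recover the $L^{2^*}(\O_T)$ norm yields the claim. Beyond this truncation, the proof is just assembly of the three propositions with the exponent bookkeeping above.
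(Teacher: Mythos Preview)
Your chain is exactly the one the paper intends: the paper's own proof consists of the single sentence ``Combining Propositions \ref{NormeEquivalenti} and \ref{SobolevLpDebole}'', and your three-step conversion (upper bound of Proposition \ref{NormeEquivalenti} at $p=2^*$, then Proposition \ref{SobolevLpDebole}, then lower bound of Proposition \ref{NormeEquivalenti} at $p=2$ applied to $|\grad_{\H^n}u|$) is precisely how those two results combine to yield the stated constant. Your verification that $|\grad_{\H^n}u|\circ\d_T=T^{-Q/2}|\grad_{\H^n}u|$, so that the $\alpha p=Q$ hypothesis is met for the gradient with $p=2$, is correct and is the only non-bookkeeping point.

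The concern you flag is genuine and the paper does not address it: Proposition \ref{SobolevLpDebole} is stated under the hypothesis $u\in L^{2,\infty}(\H^n)$, which a nonzero element of $X_T$ never satisfies (the scaling exponent $(Q-2)/2$ forces $\l^2\mu\{|u|>\l\}\to\infty$ as $\l\to 0^+$, as you note). Your cutoff remedy is sound: for $u_N=u\chi_N$ one has $u_N\in L^{2,\infty}$, Proposition \ref{SobolevLpDebole} applies, the cross term $u\,\grad_{\H^n}\chi_N$ lives on two dilated shells and its $L^2$ (hence $L^{2,\infty}$) norm is controlled, uniformly in $N$, by $\N{u/|x|}_{L^2(\O_T)}\lesssim\N{\grad_{\H^n}u}_{L^2(\O_T)}$ via the Hardy inequality on $\H^n$ already invoked in Section \ref{DifferenzialeSecondo}; letting $N\to\infty$ and using Fatou for the $L^{2^*,\infty}$ norm finishes the argument. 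So your proof is the paper's proof, made rigorous at the one point the paper elides.
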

 
\section{Construction of a family of approximate solutions}\label{Costruzione}
 In order to apply a perturbative method, we find a family of approximate stationary points of $\ci{J}_T$ for $T$ big enough.
 
 The family is the following:
 
 $$\Psi_{\l,T} = \sum_{k\in\gr{Z}} \o_{\l/T^k} = \sum_{k\in\gr{Z}}T^{\frac{Q-2}{2}k}\o_{\l}\circ\d_{T^k}$$
 (we will hide the dependence by $T$ whether not necessary).
 The series converges uniformly on compact sets, because, if $x\in K$,
 $$\Psi_{\l}(x) = \sum_{k\in\gr{Z}}T^{\frac{Q-2}{2}k}\o_{\l}\circ\d_{T^k} \le$$
 $$\le C_{\l,K}\sum_{k\ge 0}T^{\frac{Q-2}{2}k} \frac{1}{T^{k(Q-2)}} + C_{\l,K}\sum_{k<0}T^{\frac{Q-2}{2}k} \le C_{\l,K}.$$
 The subriemannian gradient satisfies
 $$|\grad_{\H^n}\Psi_{\l}(x)| \le \sum_{k\in\gr{Z}}T^{\frac{Q-2}{2}k}T^k|\grad_{\H^n}\o_{\l}|\circ\d_{T^k} \le$$
 $$\le C_{\l,K}\sum_{k\ge 0}T^{\frac{Q}{2}k} \frac{1}{T^{k(Q-1)}} + C_{\l,K}\sum_{k<0}T^{\frac{Q}{2}k} \le C_{\l,K}$$
 and so it converges uniformly on compact sets. The same holds for higher order subriemannian derivatives.
 $\Psi_{\l}\in X_T$ because
 $$\Psi_{\l}\circ\d_T = \sum_{k\in\gr{Z}}T^{\frac{Q-2}{2}k}\o_{\l}\circ\d_{T^k}\circ\d_T = T^{-\frac{Q-2}{2}}\sum_{k\in\gr{Z}}T^{\frac{Q-2}{2}k}\o_{\l}\circ\d_{T^k}=T^{-\frac{Q-2}{2}}\Psi_{\l}.$$
 It holds that
 $$\Psi_{T\l} = \sum_{k\in\gr{Z}}T^{\frac{Q-2}{2}k}\o_{T\l}\circ\d_{T^k} = \sum_{k\in\gr{Z}} T^{\frac{Q-2}{2}k} \frac{1}{(T\l)^{\frac{Q-2}{2}}}\o\circ\d_{1/T\l}\circ\d_{T^k}=$$
 $$= \sum_{k\in\gr{Z}} T^{\frac{Q-2}{2}(k-1)}\frac{1}{\l^{\frac{Q-2}{2}}}\o\circ\d_{1/\l}\circ\d_{T^{k-1}}= \sum_{k\in\gr{Z}} T^{\frac{Q-2}{2}(k-1)}\o_{\l}\circ\d_{T^{k-1}} = \Psi_{\l}.$$
 Therefore the set $\ci{Z}_T=\{ \Psi_{\l} \;|\; \l\in (0,\infty)\}$ is a closed curve in $X_T$.
 
 Moreover, using formula \eqref{GeneratoreDilatazioni}, it can be computed that
 
 $$\frac{\de\Psi_{\l}}{\de\l} = \frac{\de}{\de\l}\sum_{k\in\gr{Z}} \o_{\l/T^k} = \sum_{k\in\gr{Z}}\frac{\de}{\de\l}\left(\l^{-\frac{Q-2}{2}}\o_{1/T^k}\circ\d_{\l^-1}\right)=$$
 $$= \sum_{k\in\gr{Z}}\left( -\frac{Q-2}{2}\frac{1}{\l}\o_{\l/T^k} - \l^{-\frac{Q-2}{2}}\frac{1}{\l^2}\l Z(\o_{1/T^k}\circ\d_{\l^-1})\right)=$$
 $$= \sum_{k\in\gr{Z}}\left( -\frac{Q-2}{2}\frac{1}{\l}\o_{\l/T^k} - \frac{1}{\l}Z(\o_{\l/T^k})\right)=$$
 \begin{equation}\label{DerivataLambda}
  = -\frac{Q-2}{2}\frac{1}{\l}\Psi_{\l} - \frac{1}{\l}Z(\Psi_{\l}).
 \end{equation}

 This implies that the curve $\ci{Z}_T$ is immersed for $T$ big enough, because if $\frac{\de\Psi_{\l}}{\de\l}$ was zero then
 $Z(\Psi_{\l})=-\frac{Q-2}{2}\Psi_{\l}$ would be zero, and by the aforementioned Euler's theorem $\Psi_{\l}$ would be homogeneous of
 degree $-\frac{Q-2}{2}$; but it is clearly not by construction if $T$ is big enough.
 
 We want to prove the following proposition.
 
 \begin{prop}\label{StimaGradiente}
  For every $\e$ there exists $T_0$, depending only on by $n$, such that if $T\ge T_0$ then $\N{\grad_{\H^n} \ci{J}_T} <\e$ on $\ci{Z}_T$.
 \end{prop}
 
 We divide the proof in several lemmas.

 First we compute the differential of $\ci{J}_T$ in $\Psi_{\l}$:
 $$d\ci{J}_T(\Psi_{\l})[u]= \int_{\O_T}\grad_{\H^n}\Psi_{\l}\per\grad_{\H^n} u - \Psi_{\l}^{2^*-1}u =$$
 $$= \int_{\O_T}\sum_{k\in\gr{Z}}\grad_{\H^n}\o_{\l/T^k}\per\grad_{\H^n} u- \left(\sum_{k\in\gr{Z}} \o_{\l/T^k}\right)^{2^*-1}u =$$
 $$= \sum_{k\in\gr{Z}}\left( \int_{\O_T}\grad_{\H^n}\o_{\l/T^k}\per\grad_{\H^n} u-\o_{\l/T^k}^{\frac{Q+2}{Q-2}}u\right) + $$
 $$- \int_{\O_T}\left[\left(\sum_{k\in\gr{Z}} \o_{\l/T^k}\right)^{\frac{Q+2}{Q-2}}-\sum_{k\in\gr{Z}}\o_{\l/T^k}^{\frac{Q+2}{Q-2}}\right]u=$$
 \begin{equation}\label{ApiuB}
  :=A+B.
 \end{equation}
 
 \begin{lemma}
  In the above notation, $A=0$.
 \end{lemma}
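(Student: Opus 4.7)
The plan is to collapse the $\Z$-indexed sum $A$ into a single integration by parts of $\Psi_{\l}$ against $u$ on $\O_T$, which will vanish by the lemma proved just above. The essential input is that each summand $\o_{\l/T^k}$ is a classical global solution of the Jerison--Lee equation, so that $-\D_{\H^n}\o_{\l/T^k}=\o_{\l/T^k}^{(Q+2)/(Q-2)}$; adding these identities gives, pointwise on $\O_T$,
$$-\D_{\H^n}\Psi_{\l}=\sum_{k\in\Z}\o_{\l/T^k}^{\frac{Q+2}{Q-2}},$$
where term-by-term differentiation is licensed by the uniform-on-compact convergence of the series defining $\Psi_{\l}$ together with its subriemannian derivatives of every order, already recorded in the construction of $\Psi_{\l}$.

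The next step is to interchange sum and integral in the formula for $A$. For the linear piece, the estimate $\sum_{k}|\grad_{\H^n}\o_{\l/T^k}|\le N_{\l,T}$ on the compact set $\O_T$---exactly the majorant displayed just after the definition of $\Psi_{\l}$---is an integrable dominant against $|\grad_{\H^n} u|\in L^2(\O_T)$. For the nonlinear piece, the pointwise bound $\o_{\l/T^k}^{(Q+2)/(Q-2)}\le\Psi_{\l}^{4/(Q-2)}\o_{\l/T^k}$ yields
$$\sum_{k\in\Z}\o_{\l/T^k}^{\frac{Q+2}{Q-2}}\le \Psi_{\l}^{\frac{Q+2}{Q-2}}\le M_{\l,T} \quad \text{on } \O_T,$$
which dominates $u\in L^{2^*}(\O_T)\subset L^1(\O_T)$. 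After the two interchanges,
$$A=\int_{\O_T}\grad_{\H^n}\Psi_{\l}\per\grad_{\H^n} u+\int_{\O_T}\D_{\H^n}\Psi_{\l}\per u,$$
and this is zero by the integration-by-parts lemma applied to $\Psi_{\l},u\in X_T$.

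The only delicate point is the interchange of sum and integral, but it is settled by the uniform-on-compact estimates already established during the construction of $\Psi_{\l}$; everything else is a direct consequence of the fact that each $\o_{\l/T^k}$ solves the unperturbed equation and that both $\Psi_{\l}$ and $u$ live in $X_T$.
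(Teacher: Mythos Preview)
Your proof is correct and takes a genuinely different route from the paper's. The paper unfolds the sum over $k$ into a single integral over all of $\H^n\setminus\{0\}$: by changing variables $x\mapsto\d_{T^{-k}}x$ in the $k$-th term and using the periodicity of $u$, each summand becomes $\int_{\d_{T^k}(\O_T)}(\grad_{\H^n}\o_{\l}\per\grad_{\H^n} u-\o_{\l}^{(Q+2)/(Q-2)}u)$, and since the dilated annuli tile $\H^n\setminus\{0\}$, one obtains $A=\int_{\H^n}(\grad_{\H^n}\o_{\l}\per\grad_{\H^n} u-\o_{\l}^{(Q+2)/(Q-2)}u)$. The paper then integrates by parts on $\H^n$ with radial cutoffs $\phi_{\e,R}$, using that $\o_{\l}$ solves the equation, and checks by hand that the boundary terms at $0$ and at infinity vanish from the decay of $\grad_{\H^n}\o_{\l}$ and the growth of $u$.

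Your approach instead keeps everything on the compact set $\O_T$: you sum the equations $-\D_{\H^n}\o_{\l/T^k}=\o_{\l/T^k}^{(Q+2)/(Q-2)}$ to identify the nonlinear piece as $-\D_{\H^n}\Psi_{\l}$, and then the periodic integration-by-parts lemma for $X_T$ (proved immediately before) kills $A$ in one stroke. This is shorter and avoids the cutoff-and-decay bookkeeping entirely; the paper's route, on the other hand, makes explicit that $A$ is exactly $d\ci{J}(\o_{\l})[u]$ on the whole space, which is conceptually suggestive even if not needed here.
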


 \begin{proof} We have
 $$A = \sum_{k\in\gr{Z}}\int_{\O_T}T^{\frac{Q-2}{2}k}(\grad_{\H^n}\o_{\l})\circ\d_{T^k}\per\grad_{\H^n} u-(T^{\frac{Q-2}{2}k})^{\frac{Q+2}{Q-2}}(\o_{\l}\circ\d_{T^k})^{\frac{Q+2}{Q-2}}u=$$
 $$= \sum_{k\in\gr{Z}}\int_{\O_T}T^{\frac{Q}{2}k}(\grad_{\H^n}\o_{\l})\circ\d_{T^k}\per\grad_{\H^n} u- T^{\frac{Q+2}{2}k}(\o_{\l}\circ\d_{T^k})^{\frac{Q+2}{Q-2}}u=$$
 $$= \sum_{k\in\gr{Z}}\int_{\d_{T^k}(\O_T)}T^{-kQ}\left[ T^{\frac{Q}{2}k}\grad_{\H^n}\o_{\l}\per(\grad_{\H^n} u)\circ\d_{T^{-k}} - T^{\frac{Q+2}{2}k}\o_{\l}^{\frac{Q+2}{Q-2}}u\circ\d_{T^{-k}}\right]=$$
 $$= \sum_{k\in\gr{Z}}\int_{\d_{T^k}(\O_T)}T^{-\frac{Q}{2}k}T^k\grad_{\H^n}\o_{\l}\per\grad_{\H^n} (u\circ\d_{T^{-k}}) - \o_{\l}^{\frac{Q+2}{Q-2}}u=$$
 $$= \sum_{k\in\gr{Z}}\int_{\d_{T^k}(\O_T)} \grad_{\H^n}\o_{\l}\per\grad_{\H^n} u- \o_{\l}^{\frac{Q+2}{Q-2}}u= \int_{\H^n} \grad_{\H^n}\o_{\l}\per\grad_{\H^n} u- \o_{\l}^{\frac{Q+2}{Q-2}}u$$
%
  Let us pick a family of smooth functions $\phi_{\e,R}$ such that $\phi_{\e,R}\equiv 1$ on $B_R\setminus B_{2\e}$, $\phi_{\e,R}\equiv 0$ on $B_{\e}$ and $\H^n\setminus B_{R+1}$,
  $|\grad_{\H^n} \phi_{\e,R}|\le \frac{C}{\e}$ on $B_{2\e}\setminus B_{\e}$ and $|\grad_{\H^n} \phi_{\e,R}|\le C$ on $B_{R+1}\setminus B_R$.
  Then
  $$A =\lim_{\substack{\e\to 0\\R\to\infty}} \int_{\H^n} \left(\grad_{\H^n}\o_{\l}\per\grad_{\H^n} u- \o_{\l}^{\frac{Q+2}{Q-2}}u\right)\phi_{\e,R} =$$
  $$= \lim_{\substack{\e\to 0\\R\to\infty}} \int_{\H^n} -(\D_{\H^n}\o_{\l}+\o_{\l}^{\frac{Q+2}{Q-2}})u\phi_{\e,R} - u\grad_{\H^n}\o_{\l}\per\grad_{\H^n}\phi_{\e,R} = $$
  $$ =  -\lim_{R\to\infty} \int_{B_{R+1}\setminus B_R} u\grad_{\H^n}\o_{\l}\per\grad_{\H^n}\phi_{\e,R}- \lim_{\e\to 0} \int_{B_{2\e}\setminus B_{\e}} u\grad_{\H^n}\o_{\l}\per\grad_{\H^n}\phi_{\e,R}.$$
  If $x\to\infty$ then $\grad_{\H^n}\o_{\l}\lesssim\frac{1}{|x|^{Q-1}}$ and $u\lesssim|x|^{-\frac{Q-2}{2}}$, and so the first limit is zero.
  If $x\to 0$ then $\grad_{\H^n}\o_{\l}\lesssim 1$ and $u\lesssim|x|^{-\frac{Q-2}{2}}$, and so also the second limit is zero. Therefore $A=0$.
 \end{proof}
 
 Now we have to estimate the term $B$ from formula \eqref{ApiuB}.
 

 

 \begin{lemma}\label{TermineB}
  In the above notation
  $$|B| \le C(T)\N{u}_{X_T},$$
  where $C(T)$ tends to zero uniformly in $\l$ as $T$ tends to infinity.
 \end{lemma}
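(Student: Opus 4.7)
The plan is to apply H\"older's inequality and Proposition~\ref{lemmaSobolev}. Setting $F:=\Psi_{\l}^{2^*-1}-\sum_{k\in\Z}\o_{\l/T^k}^{2^*-1}$ and letting $(2^*)'=\frac{2Q}{Q+2}$, H\"older and Proposition~\ref{lemmaSobolev} give
$$|B|\le\N{F}_{L^{(2^*)'}(\O_T)}\N{u}_{L^{2^*}(\O_T)}\le C(\log T)^{(Q-2)/(2Q)}\N{F}_{L^{(2^*)'}(\O_T)}\N{u}_{X_T},$$
so it suffices to show that $(\log T)^{(Q-2)/(2Q)}\N{F}_{L^{(2^*)'}(\O_T)}\to 0$ as $T\to\infty$, uniformly in $\l$. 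Since $\Psi_{T\l}=\Psi_{\l}$, I may reduce to $\l\in[T^{-1/2},T^{1/2})$.

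The heart of the argument is a pointwise bound on $F$. Because $x\mapsto x^{2^*-1}$ is super-additive on $[0,\infty)$, $F\ge 0$, and the one-variable convexity inequality $(a+b)^{2^*-1}-a^{2^*-1}\le(2^*-1)(a+b)^{2^*-2}b$ applied with $a=\o_{\l/T^{k^*}}$ and $b=R:=\Psi_{\l}-\o_{\l/T^{k^*}}$ yields $F\le(2^*-1)\Psi_{\l}^{2^*-2}R$ for any choice of $k^*$. I make this choice pointwise: at each $x\in\O_T$ let $k^*(x)\in\Z$ be the unique integer such that $\l T^{-k^*(x)}|x|^{-1}\in[T^{-1/2},T^{1/2})$, that is, the bump whose scale is multiplicatively closest to $|x|$. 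From the elementary bound
$$\o_{\mu}(y)\le C\min\bigl(\mu^{-(Q-2)/2},\,\mu^{(Q-2)/2}|y|^{-(Q-2)}\bigr),$$
one deduces that for $k=k^*(x)+m$ and $r=|x|$,
$$\o_{\l/T^k}(x)\le C\,r^{-(Q-2)/2}T^{-|m-m_*|(Q-2)/2}$$
for some $m_*\in[-1/2,1/2)$; summing the geometric series over $m\in\Z$ (resp.\ over $m\neq 0$) then gives
$$\Psi_{\l}(x)\le C|x|^{-(Q-2)/2},\qquad R(x)\le CT^{-(Q-2)/4}|x|^{-(Q-2)/2}.$$

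Plugging these into $F\le(2^*-1)\Psi_{\l}^{2^*-2}R$ produces $F(x)\le CT^{-(Q-2)/4}|x|^{-(Q+2)/2}$, and integrating
$$\N{F}_{L^{(2^*)'}(\O_T)}^{(2^*)'}\le CT^{-(Q-2)Q/(2(Q+2))}\int_1^T r^{-Q}r^{Q-1}\,dr= CT^{-(Q-2)Q/(2(Q+2))}\log T$$
yields $\N{F}_{L^{(2^*)'}(\O_T)}\le CT^{-(Q-2)/4}(\log T)^{(Q+2)/(2Q)}$, so
$$(\log T)^{(Q-2)/(2Q)}\N{F}_{L^{(2^*)'}(\O_T)}\le CT^{-(Q-2)/4}\log T\to 0,$$
uniformly in $\l$, closing the argument. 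The main obstacle is precisely the pointwise tail bound on $R$: since $\l$ is only controlled modulo $T^{\Z}$, for certain $\l$ no single bump $\o_{\l/T^k}$ dominates on all of $\O_T$, and one has to subtract the bump closest to the current scale pointwise before summing the geometric decay of the remaining terms.
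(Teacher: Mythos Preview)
Your proof is correct and follows essentially the same approach as the paper: H\"older plus Proposition~\ref{lemmaSobolev}, followed by a pointwise bound obtained by isolating at each $x$ the dominant bump (what the paper phrases as ``$\l=\l(x)$'' with $|x|/\l\in[T^{-1/2},T^{1/2}]$) and summing the geometric tail of the remaining bubbles to get decay $T^{-(Q-2)/4}$. The only cosmetic difference is your use of the mean-value inequality $(a+b)^{p}-a^{p}\le p(a+b)^{p-1}b$ in place of the paper's two-term estimate $[(x+y)^{\alpha}-x^{\alpha}]^{\beta}\le C(x^{(\alpha-1)\beta}y^{\beta}+y^{\alpha\beta})$; both routes yield the same final bound $C\,T^{-(Q-2)/4}\log T\to 0$.
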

 
 \begin{proof}
 $$|B| \le \int_{\O_T}\left[\left(\sum_{k\in\gr{Z}} \o_{\l/T^k}\right)^{\frac{Q+2}{Q-2}}-\sum_{k\in\gr{Z}}\o_{\l/T^k}^{\frac{Q+2}{Q-2}}\right]|u|\le$$
 $$\le \int_{\O_T}\left[\left(\sum_{k\in\gr{Z}} \o_{\l/T^k}\right)^{\frac{Q+2}{Q-2}}-\o_{\l}^{\frac{Q+2}{Q-2}}\right]|u| \le$$
 $$\le \left\{\int_{\O_T}\left[\left(\sum_{k\in\gr{Z}} \o_{\l/T^k}\right)^{\frac{Q+2}{Q-2}}-\o_{\l}^{\frac{Q+2}{Q-2}}\right]^{\frac{2Q}{Q+2}}\right\}^{\frac{Q+2}{2Q}}\N{u}_{L^{\frac{2Q}{Q-2}}(\O_T)} \le$$
 $$\le C(\log T)^{\frac{Q-2}{2Q}}\N{u}_{X_T}\left\{\int_{\O_T}\left[\left(\sum_{k\in\gr{Z}} \o_{\l/T^k}\right)^{\frac{Q+2}{Q-2}}-\o_{\l}^{\frac{Q+2}{Q-2}}\right]^{\frac{2Q}{Q+2}}\right\}^{\frac{Q+2}{2Q}} =$$
 $$= C (\log T)^{\frac{Q-2}{2Q}}\N{u}_{X_T}\per$$
 $$\per\left\{\int_{\O_T}\left[\left(\sum_{k\in\gr{Z}} |x|^{\frac{Q-2}{2}}\o_{\l/T^k}\right)^{\frac{Q+2}{Q-2}}-
 (|x|^{\frac{Q-2}{2}}\o_{\l})^{\frac{Q+2}{Q-2}}\right]^{\frac{2Q}{Q+2}}\frac{dx}{|x|^Q}\right\}^{\frac{Q+2}{2Q}}$$
 by Proposition \ref{lemmaSobolev} (taking $T\ge T_0>1$, since we are going to make a limit for $T\to\infty$).
 Let us define $\eta_{\l}=|x|^{\frac{Q-2}{2}}\o_{\l}$. Then
 $$|B| \le C(\log T)^{\frac{Q-2}{2Q}}\N{u}_{X_T}
 \left\{\int_{\O_T}\left[\left(\sum_{k\in\gr{Z}} \eta_{\l/T^k}\right)^{\frac{Q+2}{Q-2}}-\eta_{\l}^{\frac{Q+2}{Q-2}}\right]^{\frac{2Q}{Q+2}}\frac{dx}{|x|^Q}\right\}^{\frac{Q+2}{2Q}}.$$
 By periodicity we can suppose that $\frac{|x|}{\l}\in\left[\frac{1}{\sqrt{T}},\sqrt{T}\right]$, with $\l=\l(x)$.
 The function $\eta_{\l}$ is bounded and tends to zero for $|x|\to 0,\infty$.
 If $k\ge 0$ and $T$ is large enough then $\eta_{\l/T^k}$ satisfies estimates
 $$|\eta_{\l/T^k}(x)|\lesssim \left( \frac{\left(\frac{T^k}{\l}\right)|x|}{1+\left(\frac{T^k}{\l}\right)^2|x|^2} \right)^{\frac{Q-2}{2}}\lesssim \left( T^k\frac{|x|}{\l}\right)^{-\frac{Q-2}{2}} \le
 \left( \frac{1}{T}\right)^{\left(k-\frac{1}{2}\right)\frac{Q-2}{2}}$$
 and
 $$|\eta_{\l/T^{-k}}(x)|\lesssim \left( \frac{\left(\frac{T^{-k}}{\l}\right)|x|}{1+\left(\frac{T^{-k}}{\l}\right)^2|x|^2} \right)^{\frac{Q-2}{2}}\lesssim
 \left(\frac{1}{T^k}\frac{|x|}{\l}\right)^{\frac{Q-2}{2}} \le \left( \frac{1}{T}\right)^{\left(k-\frac{1}{2}\right)\frac{Q-2}{2}}$$
 uniformly in $\l$.
 It is easy to verify that, for $\alfa,\beta\ge 1$ the function
 $$\frac{\left[(x+y)^{\alfa}-x^{\alfa}\right]^{\beta}}{x^{(\alfa-1)\beta}y^{\beta}+y^{\alfa\beta}}$$
 is bounded on $(0,\infty)^2$, and so there exist $C$ such that
 $$\left[(x+y)^{\alfa}-x^{\alfa}\right]^{\beta} \le C(x^{(\alfa-1)\beta}y^{\beta}+y^{\alfa\beta})$$
 for $x,y\ge 0$. Taking
 $$x=\eta_{\l}, \;\;\;\; y= \sum_{k\in\gr{Z}\setminus\{0\}}\eta_{\l/T^k}, \;\;\;\; \alfa=\frac{Q+2}{Q-2} \;\;\;\;\text{and}\;\;\;\; \beta=\frac{2Q}{Q+2}$$
 one gets that
 $$|B| \le C(\log T)^{\frac{Q-2}{2Q}}\N{u}_{X_T}
 \left\{\int_{\O_T}\left[\eta_{\l}^{\frac{8Q}{(Q+2)(Q-2)}}\left(\sum_{k\in\gr{Z}\setminus\{0\}}\eta_{\l/T^k}\right)^{\frac{2Q}{Q+2}}+\right.\right.$$
 $$\left.\left.+\left(\sum_{k\in\gr{Z}\setminus\{0\}}\eta_{\l/T^k}\right)^{\frac{2Q}{Q-2}} \right]\frac{dx}{|x|^Q}\right\}^{\frac{Q+2}{2Q}}.$$
 Let
 $$\O_T^1= \{x\in\O_T\;|\; \l(x) < 1\}$$
 and
 $$\O_T^2= \{x\in\O_T\;|\; \l(x) \ge 1\}.$$
 Then
 $$|B| \le  C(\log T)^{\frac{Q-2}{2Q}}\N{u}_{X_T} \left\{\left(\int_{\O_T^1}+\int_{\O_T^2}\right)\left[ \eta_{\l}^{\frac{8Q}{(Q+2)(Q-2)}}\left(\sum_{k\in\gr{Z}\setminus\{0\}}\eta_{\l/T^k}\right)^{\frac{2Q}{Q+2}}+\right.\right.$$
 $$\left.\left.+\left(\sum_{k\in\gr{Z}\setminus\{0\}}\eta_{\l/T^k}\right)^{\frac{2Q}{Q-2}} \right]\frac{dx}{|x|^Q}\right\}^{\frac{Q+2}{2Q}}\lesssim$$
 $$\lesssim  C(\log T)^{\frac{Q-2}{2Q}}\N{u}_{X_T}\left\{\int_{\O_T}\left[\left(\frac{1}{T}\right)^{\frac{Q-2}{4}\per \frac{2Q}{Q+2}} +
 \left(\frac{1}{T}\right)^{\frac{Q-2}{4}\per \frac{2Q}{Q-2}}\right]\frac{dx}{|x|^Q}\right\}^{\frac{Q+2}{2Q}}\lesssim$$
 $$\lesssim  C(\log T)^{\frac{Q-2}{2Q}}\N{u}_{X_T}\left\{\left(\frac{1}{T}\right)^{\frac{Q(Q-2)}{2(Q+2)}}\int_{\O_T}\frac{dx}{|x|^Q}\right\}^{\frac{Q+2}{2Q}}\lesssim$$
 $$\lesssim C(\log T)^{\frac{Q-2}{2Q}}\N{u}_{X_T}\left\{\left(\frac{1}{T}\right)^{\frac{Q(Q-2)}{2(Q+2)}}\log{T}\right\}^{\frac{Q+2}{2Q}} \longrightarrow0$$
 uniformly in $\l$.
 \end{proof}
%

 \begin{proof}[Proof of Proposition \ref{StimaGradiente}]
  It follows from the above lemmas.
 \end{proof}

 \section{Non degeneracy of the second differential}\label{DifferenzialeSecondo}
 
 In order to verify the non degeneracy of the second differential, we restrict ourselves to the space $\widetilde{X}_T$ defined
 in Section \ref{notazioni} (which contains $\ci{Z}_T$).
 We recall the following result \cite{MU}.
 
 \begin{prop}\label{teorMU}
  A function $u\in S^1(\H^n)$ is a solution of the following equation:
  \begin{equation}\label{NucleoDiffSecondo}
    −\D_{\H^n} u = (Q^* - 1) \o^{Q^∗-2}u
  \end{equation}
  if and only if there exist coefficients $\mi,\ni_1,\ecc,\ni_{2n}\in\R$ such that
  $$u = \mi \left. \frac{\de\o_{\l}}{\de\l} \right|_{\l=1} + \sum_{i=0}^{2n}\ni_i T_i(\o_{\l}).$$
 \end{prop}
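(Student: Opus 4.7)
The \emph{if} direction is direct differentiation. Since $-\D_{\H^n}\omega_\l=\omega_\l^{Q^*-1}$ for every $\l>0$, differentiating in $\l$ at $\l=1$ (legal by joint smoothness of $\omega_\l$) gives
$$-\D_{\H^n}\bigl(\de_\l\omega_\l|_{\l=1}\bigr)=(Q^*-1)\omega^{Q^*-2}\,\de_\l\omega_\l|_{\l=1},$$
which is exactly \eqref{NucleoDiffSecondo}. The translation generators work because each $T_i$ is left-invariant, hence $[T_i,\D_{\H^n}]=0$, so applying $T_i$ to both sides of $-\D_{\H^n}\omega=\omega^{Q^*-1}$ produces the linearized equation for $T_i\omega$. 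The known decay of $\omega_\l$ and its first derivatives places all the candidates in $S^1(\H^n)$, so the $(2n+2)$-dimensional space claimed in the statement is contained in the kernel.

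For the converse, my plan is to transplant the equation to the standard CR sphere $S^{2n+1}\subset\C^{n+1}$ through the Cayley transform $C\colon S^{2n+1}\setminus\{N\}\to\H^n$. Under $C$ the solution $\omega$ pulls back, up to the conformal factor, to a positive constant (the Webster-constant solution on the round CR sphere), and the linearized equation becomes an eigenvalue problem
$$-\D_b v=\kappa v$$
on $S^{2n+1}$, where $\D_b$ is the sub-Laplacian of the standard CR structure and $\kappa$ is an explicit constant prescribed by the conformal change formula. The spectrum of $\D_b$ on the CR sphere is classical (Folland; Greiner--Stein): its eigenspaces are the restrictions of bihomogeneous harmonic polynomials on $\C^{n+1}$ of bidegree $(p,q)$, with eigenvalues depending explicitly on $p,q,n$. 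A short calculation matches $\kappa$ with the bidegrees $(1,0)$ and $(0,1)$, whose associated eigenspace is the restriction of $\mathrm{span}_{\R}\{z_1,\ldots,z_{n+1},\bar z_1,\ldots,\bar z_{n+1}\}$, of real dimension exactly $2n+2$. Pulling this back via $C^{-1}$ and unwinding the conformal factor should identify its basis, up to a linear change, with $\de_\l\omega_\l|_{\l=1}$ and $T_0\omega,\ldots,T_{2n}\omega$ on $\H^n$, so that a dimension count forces equality.

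The delicate step is this last identification: keeping careful track of the conformal weight under $C$ and verifying that the abstract bidegree-one CR-spherical harmonics correspond precisely to the infinitesimal dilation and translation generators on $\H^n$, with none missed and none in excess. A cleaner variant, and probably closer to what \cite{MU} actually does, is to replace the explicit spectral computation by a CR-analogue of Obata's rigidity theorem: any $S^1$ solution of the linearized equation at this distinguished eigenvalue must be tangent to the orbit of $\omega$ under the CR-conformal group of $S^{2n+1}$, whose generators, modulo the isotropy of the radial profile, split exactly into the claimed $2n+2$ directions.
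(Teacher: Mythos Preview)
The paper does not prove this proposition: it is quoted without proof from \cite{MU}, so there is no argument in the present paper to compare your sketch against.

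That said, your outline is essentially the strategy carried out in \cite{MU}. There the linearized equation is transported to $S^{2n+1}$ via the Cayley transform, where it becomes an eigenvalue problem for the intrinsic sub-Laplacian; the relevant eigenspace is identified with the first nontrivial CR-spherical harmonics (real span of the coordinate functions $z_j,\bar z_j$), and the dimension count gives exactly $2n+2$. The ``delicate step'' you single out---matching those harmonics, after pulling back through the Cayley map and the conformal weight, with $\de_\l\o_\l|_{\l=1}$ and the $T_i\o$---is indeed the technical core, and in \cite{MU} it is done by direct computation rather than by an Obata-type argument. Your ``if'' direction is correct as written.
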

 
  For $u$ to solve \eqref{NucleoDiffSecondo} is equivalent to being in the kernel of $\ci{J}''$.
 Since the operator $\ci{J}''$ is the sum of an isomorphism and a compact operator on $S^1(\H^n)$ (see \cite{MU})
 and that it only a negative eigenvalue whose one-dimensional eigenspace is spanned by $\o_{\l}$ (see \cite{BCD}
 there exists a constant $C$ such that if $u\in S^1(\H^n)$ and
 \begin{equation}\label{Condizioni}
  \int_{\H^n}\grad_{\H^n} u\per\grad_{\H^n}\frac{\de\o_{\l}}{\de\l} =0, \;\; \int_{\H^n}\grad_{\H^n} u\per\grad_{\H^n} T_i(\o_{\l})=0,
 \int_{\H^n}\grad_{\H^n} u\per\grad_{\H^n}\o_{\l}=0
 \end{equation}
 then
 \begin{equation}\label{NonDegGlob}
  d^2\ci{J}(\omega_{\l})[u,u] \ge C \int_{\H^n}|\grad_{\H^n} u|^2.
 \end{equation}
 
 Furthermore, since $\ci{J}''$ is selfadjoint and $\o_{\l}$ is an eigenfunction,
 \begin{equation}\label{Ortogonalita}
  d^2\ci{J}(\omega_{\l})[\o_{\l},u] =0.
 \end{equation}

 We want to use this to prove a similar non degeneracy result for $\Psi_{\l}$ on $\O_T$ for $T$ large enough.
 
 In order to do this, we introduce on $X_T$ the norm
 $$\N{u}_{T,\g{H}}^2 = \int_{\O_T}\left(|\grad_{\H^n} u|^2 + \left| \frac{u}{|x|}\right|^2\right).$$
 
 Thanks to Hardy's inequality in $\H^n$ (see Lemma 2.1 in \cite{BCX}, or otherwise apply Hölder inequality for Lorentz spaces),
 if $u\in S^1(\H^n)$, then, under the aforementioned hypotheses \eqref{Condizioni},
 $$|d^2\ci{J}(\omega_{\l})[u,u]| \ge C \int_{\H^n}|\grad_{\H^n} u|^2+ \left| \frac{u}{|x|}\right|^2.$$
 
 Using this we will prove that, if $u\in\widetilde{X}_T$ satisfies
 \begin{equation}\label{Ortogonale}
  \int_{\O_T}\grad_{\H^n} u\per\grad_{\H^n}\frac{\de\Psi_{\l}}{\de\l} =0
 \end{equation}
 and
 \begin{equation}\label{OrtogonaleAutovNeg}
  \int_{\O_T}\grad_{\H^n} u\per\grad_{\H^n}\Psi_{\l}=0,
 \end{equation}
 then, given $\e>0$, for $T$ large
 $$d^2\ci{J}_T(\Psi_{\l})[u,u] \ge C \int_{\O_T}|\grad_{\H^n} u|^2 + \left| \frac{u}{|x|}\right|^2,$$
 $$|d^2\ci{J}_T(\Psi_{\l})[\Psi_{\l},\Psi_{\l}]| \ge C \int_{\O_T}|\grad_{\H^n} \Psi_{\l}|^2 + \left| \frac{\Psi_{\l}}{|x|}\right|^2$$
 and
 $$|d^2\ci{J}_T(\Psi_{\l})[\Psi_{\l},u]| < \e \N{\Psi_{\l}}_{T,\g{H}} \N{u}_{T,\g{H}}.$$
 This implies that $\ci{J}''_T(\Psi_{\l})$ is invertible orthogonally to $\frac{\de\Psi_{\l}}{\de\l}$, and that the
 norm of the inverse is bounded uniformly in $T$.
 
 Let us take a radial function $\rho=\rho(|x|)$ such that $\rho=1$ on $\O_T$, $\rho = 0$ on $B_{1/2}\cup (\H^n\setminus B_{2T}$, $0\le\rho\le 1$, 
 $|\grad_{\H^n}\rho|\le C$ on $B_1\setminus B_{1/2}$, $|\grad_{\H^n}\rho|\le C/T$ on $B_{2T}\setminus B_{T}$.
 
 By the computations in formula \eqref{DerivataLambda} follows that
 $$\frac{\de \o_{\l}}{\de\l} = -\frac{Q-2}{2}\frac{1}{\l}\o_{\l} - \frac{1}{\l}Z(\o_{\l}).$$

 Thanks to formula \eqref{DerivataLambda}, it can easily be proved that
 \begin{equation}\label{StimaFuori}
  \left|\grad_{\H^n}\frac{\de\Psi_{\l,T}}{\de\l}\right| \le \frac{C}{\l}\frac{1}{|x|^{\frac{Q}{2}}}.
 \end{equation}
 
 By periodicity with respect to dilations we can suppose the quantity
 $$r^Q\int_{B_2\setminus B_{1/2}} \left(|\grad_{\H^n} (u\circ\d_r)|^2 + \left|\frac{u}{|x|}\circ\d_r\right|^2\right)$$
 to be minimal for $r=1$.
 Since there are $\sim\log T$ mutually disjoint annuli in $\O_T$ of the form $\d_r\{1/2 \le |x| \le 2\}$, by easy computations one gets that
 $$\int_{B_2\setminus B_{1/2}}|\grad_{\H^n} u|^2 + \left|\frac{u}{|x|}\right|^2 \le \frac{C}{\log T}\N{u}_{T,\g{H}}^2,$$
 and so, calling $W=(B_{2T}\setminus B_{T}) \cup (B_1\setminus B_{1/2})$,
 \begin{equation}\label{MinimoCilindro}
  \int_W |\grad_{\H^n} u|^2 + \left|\frac{u}{|x|}\right|^2 \le \frac{C}{\log T}\N{u}_{T,\g{H}}^2.
 \end{equation}
 
 \begin{lemma}\label{Lemma4.2}
  If $\rho$ is a cut-off function as above, for every $\e$ there exists $T_0$ such that for $T\ge T_0$ if \eqref{Ortogonale} holds then
  $$\left|\int_{\H^n}\grad_{\H^n}(\rho u)\grad_{\H^n}\frac{\de \Psi_{T,\l}}{\de\l} \right| \le \e \frac{1}{\l}\N{u}_{T,\g{H}}$$
  and
  $$\left|\int_{\H^n}\grad_{\H^n}(\rho u)\grad_{\H^n}\Psi_{T,\l}\right| \le \e \N{u}_{T,\g{H}}.$$
 \end{lemma}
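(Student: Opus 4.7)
The plan is to decompose
$$\int_{\H^n}\grad_{\H^n}(\rho u)\per\grad_{\H^n}\frac{\de\Psi_{\l}}{\de\l} = \int_{\H^n}\rho\,\grad_{\H^n} u\per\grad_{\H^n}\frac{\de\Psi_{\l}}{\de\l} + \int_{\H^n} u\,\grad_{\H^n}\rho\per\grad_{\H^n}\frac{\de\Psi_{\l}}{\de\l}$$
and to partition the domain of integration into $\O_T$ and the annular transition region $W = (B_1\setminus B_{1/2})\cup(B_{2T}\setminus B_T)$ on which $\rho$ varies. On $\O_T$ we have $\rho\equiv 1$ and $\grad_{\H^n}\rho=0$, so the $\O_T$-part reduces to $\int_{\O_T}\grad_{\H^n} u\per\grad_{\H^n}\frac{\de\Psi_{\l}}{\de\l}$, which is exactly zero by the orthogonality hypothesis \eqref{Ortogonale}. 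For the second inequality, the corresponding integral $\int_{\O_T}\grad_{\H^n} u\per\grad_{\H^n}\Psi_{\l}$ must be discarded as well; I read the lemma as implicitly assuming \eqref{OrtogonaleAutovNeg} here, since otherwise the $\O_T$-contribution cannot be made small in $T$ (a direct Cauchy-Schwarz gives only $\N{u}_{X_T}\N{\Psi_{\l}}_{X_T}$, and $\N{\Psi_{\l}}_{X_T}$ is bounded below uniformly in $T$). All remaining contributions live on $W$.

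On $W$ the strategy is Cauchy-Schwarz combined with the pointwise decay \eqref{StimaFuori}, $|\grad_{\H^n}\tfrac{\de\Psi_{\l}}{\de\l}(x)|\le C/(\l|x|^{Q/2})$, and its analogue $|\grad_{\H^n}\Psi_{\l}(x)|\le C|x|^{-Q/2}$ with constants uniform in $\l$ and $T$. The latter is proved by the same telescoping series estimate used in Section \ref{Costruzione} to get uniform convergence, combined with the scaling identity $\grad_{\H^n}\Psi_{\l}\circ\d_T=T^{-Q/2}\grad_{\H^n}\Psi_{\l}$ which propagates a bound on one fundamental annulus to all of $\H^n\setminus\{0\}$. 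The weight $\int_W dx/|x|^Q$ is bounded uniformly in $T$: the inner piece $B_1\setminus B_{1/2}$ gives $O(1)$, and the outer piece has measure $O(T^Q)$ against integrand $O(T^{-Q})$. For the $u\grad_{\H^n}\rho$ term, the choices $|\grad_{\H^n}\rho|\le C$ on $B_1\setminus B_{1/2}$ and $|\grad_{\H^n}\rho|\le C/T$ on $B_{2T}\setminus B_T$ are designed precisely so that the remaining factor of $|x|$ can be absorbed into the Hardy weight $|u/|x||$, yielding control in terms of $\int_W(|\grad_{\H^n} u|^2 + |u/|x||^2)$.

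The decisive input is estimate \eqref{MinimoCilindro},
$$\int_W\left(|\grad_{\H^n} u|^2 + \left|\frac{u}{|x|}\right|^2\right)\le \frac{C}{\log T}\,\N{u}_{T,\g{H}}^2.$$
This is where periodicity is exploited: the norm $\N{u}_{T,\g{H}}^2$ distributes over $\sim\log T$ disjoint dilated copies of a fixed annulus, and by the minimality choice $W$ carries only the $1/\log T$ fraction. Plugging into Cauchy-Schwarz, each residual contribution is bounded by $C/(\l\sqrt{\log T})\,\N{u}_{T,\g{H}}$, respectively $C/\sqrt{\log T}\,\N{u}_{T,\g{H}}$; both inequalities then follow for $T\ge T_0(\e)$.

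The main obstacle is establishing that the constants in the pointwise gradient estimates for $\frac{\de\Psi_{\l}}{\de\l}$ and $\Psi_{\l}$ are genuinely independent of both $T$ and $\l$ (the latter ranging in $[1,T]$ thanks to the identification $\Psi_{T\l}=\Psi_{\l}$). This amounts to a careful summation over $k\in\Z$ in the defining series, geometric in both tails because of the explicit asymptotics of the Jerison-Lee profile $\o$; it is the same type of computation already carried out at the beginning of Section \ref{Costruzione}. Once these uniform pointwise bounds are in hand, the lemma is essentially a bookkeeping application of Cauchy-Schwarz supported by the cut-off design and \eqref{MinimoCilindro}.
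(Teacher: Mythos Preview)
Your proposal is correct and follows the same approach as the paper: subtract the orthogonality relation to reduce the integral to the transition region $W$, expand $\grad_{\H^n}(\rho u)$ by the product rule, and then apply Cauchy--Schwarz together with the pointwise decay \eqref{StimaFuori} and the smallness estimate \eqref{MinimoCilindro}. Your observation that the second inequality implicitly requires \eqref{OrtogonaleAutovNeg} rather than only \eqref{Ortogonale} is accurate and matches what the paper's ``identical'' proof actually uses.
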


 \begin{proof}
  $$\int_{\H^n}\grad_{\H^n}(\rho u)\grad_{\H^n}\frac{\de \Psi_{\l}}{\de\l} =
  \int_{\H^n}\grad_{\H^n}(\rho u)\grad_{\H^n}\frac{\de \Psi_{\l}}{\de\l}-\int_{\O_T}\grad_{\H^n} u\per\grad_{\H^n}\frac{\de\Psi_{\l}}{\de\l} =$$
  $$ =  \int_W\left[(\rho\grad_{\H^n}u + u\grad_{\H^n}\rho) \grad_{\H^n}\frac{\de \Psi_{\l}}{\de\l}\right].$$
  Thanks to formulas \eqref{StimaFuori} and \eqref{MinimoCilindro} the first estimate follows by easy computations.
  The proof of the second one is identical.
 \end{proof}
 
 \begin{lemma}\label{Lemma4.3}
  For every $\e$ there exists $T_0$ such that for $T\ge T_0$ if \eqref{Ortogonale} holds then
  $$\int_{\H^n}\grad_{\H^n}(\rho u)\grad_{\H^n}\frac{\de \o_{\l}}{\de\l} \le \e \frac{1}{\l} \N{u}_{T,\g{H}}$$
  and
  $$\int_{\H^n}\grad_{\H^n}(\rho u)\grad_{\H^n}\o_{\l} \le \e \N{u}_{T,\g{H}}.$$
 \end{lemma}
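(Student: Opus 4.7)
The approach mirrors that of Lemma \ref{Lemma4.2}, reducing the estimates for $\o_\l$ to those already obtained for $\Psi_\l$. The starting point is the identity
\[
\o_\l = \Psi_\l - \sum_{k\neq 0}\o_{\l/T^k},
\]
which follows from the absolutely convergent expansion of $\Psi_\l$, and differentiates termwise in $\l$. Applying $\int_{\H^n}\grad_{\H^n}(\rho u)\grad_{\H^n}(\cdot)$ to each identity splits both quantities of interest into a ``main'' term which Lemma \ref{Lemma4.2} controls by $\e\N{u}_{T,\g{H}}$ (resp.\ $\e\N{u}_{T,\g{H}}/\l$) under the hypothesis \eqref{Ortogonale}, and a ``tail'' sum over $k\neq 0$.

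For each tail term I would use that $\rho u$ has compact support in $B_{2T}\setminus B_{1/2}$ to integrate by parts: together with $-\D_{\H^n}\o_\mu = \o_\mu^{2^*-1}$ and, by differentiation in $\mu$, $-\D_{\H^n}\de_\l\o_\mu = (2^*-1)\o_\mu^{2^*-2}\de_\l\o_\mu$, this gives
\[
\int_{\H^n}\grad_{\H^n}(\rho u)\grad_{\H^n}\o_{\l/T^k} = \int_{\H^n}(\rho u)\,\o_{\l/T^k}^{2^*-1},
\]
and the analogous formula with $\o_{\l/T^k}^{2^*-2}\de_\l\o_{\l/T^k}$ on the right for the derivative version. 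H\"older with the dual pair $(2^*,(2^*)')$, together with the Sobolev embedding on $\H^n$ applied to the compactly supported $\rho u$ and with the bound $\N{\grad(\rho u)}_{L^2(\H^n)}\lesssim\N{u}_{T,\g{H}}$ coming from \eqref{MinimoCilindro}, gives $\N{\rho u}_{L^{2^*}(\H^n)}\lesssim\N{u}_{T,\g{H}}$. The remaining factor is $\N{\o_{\l/T^k}}_{L^{2^*}(\supp\rho)}^{2^*-1}$, which one computes directly from the explicit profile $\o_\mu(x)\lesssim\mu^{(Q-2)/2}/|x|^{Q-2}$ for $|x|\gg\mu$ and $\o_\mu(x)\lesssim\mu^{-(Q-2)/2}$ for $|x|\ll\mu$: this yields a bound $\lesssim(\l/T^k)^{(Q+2)/2}$ when $k\ge 1$ and $\lesssim(\l T^{|k|-1})^{-(Q+2)/2}$ when $k\le-1$. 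For the derivative statement, the explicit formula $\de_\l\o_\mu = -\tfrac{Q-2}{2\l}\o_\mu - \tfrac{1}{\l}Z\o_\mu$ contributes an extra factor of $1/\l$, since $Z\o_\mu$ obeys the same pointwise bounds as $\o_\mu$ in each of the regions above.

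Summing over $k\neq 0$ produces convergent geometric series whose total is of order $(\l/T)^{(Q+2)/2}+\l^{-(Q+2)/2}$. The main obstacle is the uniformity in $\l$: crudely, this quantity degenerates near the endpoints of a single periodicity interval, where a neighbouring bump $\o_{\l/T^{\pm 1}}$ drifts into $\supp\rho$ and the H\"older estimate deteriorates. The symmetry $\Psi_{T\l}=\Psi_\l$ resolves this, allowing one to shift $\l\mapsto\l/T^j$ into a convenient ``centered'' range (for instance $\l\sim\sqrt{T}$), where both summands are $O(T^{-(Q+2)/4})\to 0$. Combined with the main-term estimate from Lemma \ref{Lemma4.2}, this yields both inequalities for $T$ large, uniformly in the original $\l$.
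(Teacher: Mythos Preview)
Your argument is correct and rests on the same decomposition as the paper: the $\Psi_\l$-contribution is disposed of by Lemma~\ref{Lemma4.2}, and the tail $\sum_{k\ne 0}\o_{\l/T^k}$ is estimated directly. The difference is only in how the tail is handled. The paper does not integrate by parts; it applies Cauchy--Schwarz at the gradient level,
\[
\left|\int_{\H^n}\grad_{\H^n}(\rho u)\cdot\grad_{\H^n}\Bigl(\l\tfrac{\de\Psi_\l}{\de\l}-\l\tfrac{\de\o_\l}{\de\l}\Bigr)\right|
\le C\,\N{u}_{T,\g{H}}\left(\int_{\O_T\cup W}\Bigl|\l\grad_{\H^n}\tfrac{\de\Psi_\l}{\de\l}-\l\grad_{\H^n}\tfrac{\de\o_\l}{\de\l}\Bigr|^{2}\right)^{1/2},
\]
and then bounds the second factor by the same pointwise decay of off-center bubbles, exactly as in the proof of Lemma~\ref{TermineB}. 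Your route through the equation $-\D_{\H^n}\o_\mu=\o_\mu^{2^*-1}$, H\"older in $(2^*,(2^*)')$, and the Sobolev inequality for $\rho u$ is a legitimate alternative and in fact produces a sharper power on each tail bubble ($\mu^{(Q+2)/2}$ against $\mu^{(Q-2)/2}$ in the small-$\mu$ regime), though either is enough for a summable geometric series. Your explicit discussion of the uniformity in $\l$ via the centering $\l\sim\sqrt{T}$ is precisely what the paper leaves implicit when it appeals to ``periodicity'' in the style of Lemma~\ref{TermineB}; both proofs need this reduction, and you have made it visible.
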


 \begin{proof}
  Thanks to Lemma \ref{Lemma4.2}, we can estimate
  $$\int_{\H^n}\grad_{\H^n}(\rho u)\l\grad_{\H^n}\frac{\de \Psi_{\l}}{\de\l} - \int_{\H^n}\grad_{\H^n}(\rho u)\l\grad_{\H^n}\frac{\de \o_{\l}}{\de\l}\le$$
  $$\le C \N{u}_{T,\g{H}} \left( \int_{\O_T\cup W}\left|\l\grad_{\H^n}\frac{\de \Psi_{\l}}{\de\l}-\l\grad_{\H^n}\frac{\de \o_{\l}}{\de\l}\right|^2\right)^{1/2}.$$
  This quantity can be estimated almost identically as in the proof of Lemma \ref{TermineB}.
  The proof of the second inequality estimate is identical.
 \end{proof}
 
 \begin{lemma}
  For every $\e>0$ there exist constants $T_0$ and $C$ such that for $T\ge T_0$ if \eqref{Ortogonale} and \eqref{OrtogonaleAutovNeg} hold then
  $$|d^2\ci{J}(\omega_{\l})[\rho u,\rho u]| \ge C \int_{\H^n}|\grad_{\H^n} (\rho u)|^2 + \left|\frac{\rho u}{|x|}\right|^2,$$
  $$|d^2\ci{J}(\omega_{\l})[\rho \Psi_{\l},\rho \Psi_{\l}]| \ge
  C \int_{\H^n}|\grad_{\H^n} (\rho \Psi_{\l})|^2 + \left|\frac{\rho \Psi_{\l}}{|x|}\right|^2$$
  and
  $$|d^2\ci{J}(\omega_{\l})[\rho \Psi_{\l},\rho u]| \le \e \N{\Psi_{\l}}_{T,\g{H}} \N{u}_{T,\g{H}}.$$
 \end{lemma}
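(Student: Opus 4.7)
The approach is to transfer the Hardy-enhanced non-degeneracy \eqref{NonDegGlob} for $\ci{J}''(\o_{\l})$ on $S^1(\H^n)$ to $\Psi_{\l}$ on $\widetilde{X}_T$ by means of the cut-off $\rho$, using Lemma \ref{Lemma4.3} to control the projection of $\rho u$ and $\rho\Psi_{\l}$ onto the kernel $K$ of $\ci{J}''(\o_{\l})$ described by Proposition \ref{teorMU}. Since $\rho$ is radial and $u,\Psi_{\l}\in\widetilde{X}_T$ are $U(n)$-invariant, both $\rho u$ and $\rho\Psi_{\l}$ are automatically $S^1(\H^n)$-orthogonal to the $U(n)$-equivariant kernel directions $T_1\o_{\l},\dots,T_{2n}\o_{\l}$. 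The remaining $U(n)$-invariant kernel directions are $\de\o_{\l}/\de\l$, handled directly by Lemma \ref{Lemma4.3}, and $T_0\o_{\l}=\de\o_{\l}/\de t$, handled by an analogous estimate using that $\Psi_{\l}$ and $\de\Psi_{\l}/\de\l$ are even in $t$ while $T_0\o_{\l}$ is odd.

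Writing $\rho u=v+P_K(\rho u)$ with $v\perp K$ in $S^1(\H^n)$, Lemma \ref{Lemma4.3} together with $\N{\de\o_{\l}/\de\l}_{S^1}\sim 1/\l$ yields $\N{P_K(\rho u)}_{S^1}\le C\e\N{u}_{T,\g{H}}$. Applying \eqref{NonDegGlob} with Hardy to $v$ gives $d^2\ci{J}(\o_{\l})[v,v]\ge C(\N{\grad_{\H^n}v}_{L^2}^2+\N{v/|x|}_{L^2}^2)$. Expanding $d^2\ci{J}(\o_{\l})[\rho u,\rho u]$ as $d^2\ci{J}(\o_{\l})[v,v]$ plus cross terms that vanish on $K$ or are $O(\e\N{u}_{T,\g{H}}^2)$, and comparing the global $S^1(\H^n)$-norm of $\rho u$ with $\N{u}_{T,\g{H}}$ via \eqref{MinimoCilindro} on the cut-off region $W$, yields the first inequality for $T$ large.

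For the second inequality I would use that, by \cite{BCD}, $\o_{\l}$ spans the unique negative eigenspace of $\ci{J}''(\o_{\l})$, so $d^2\ci{J}(\o_{\l})[\o_{\l},\o_{\l}]=-(2^*-2)\int\o_{\l}^{2^*}$ is comparable to $-\N{\o_{\l}}_{\g{H}}^2$. Writing $\rho\Psi_{\l}=\o_{\l}+(\rho\Psi_{\l}-\o_{\l})$ and bounding the perturbation in $S^1(\H^n)$ by the periodization estimates of Section \ref{Costruzione}, exactly as in the proof of Lemma \ref{TermineB}, the lower bound follows. For the cross inequality, \eqref{Ortogonalita} gives $d^2\ci{J}(\o_{\l})[\o_{\l},\rho u]=0$, hence $d^2\ci{J}(\o_{\l})[\rho\Psi_{\l},\rho u]=d^2\ci{J}(\o_{\l})[\rho\Psi_{\l}-\o_{\l},\rho u]$, which is $O(\e\N{\Psi_{\l}}_{T,\g{H}}\N{u}_{T,\g{H}})$ by Cauchy--Schwarz once the perturbation has been controlled.

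The main obstacle I anticipate is the careful quantitative treatment of the $T_0\o_{\l}$ direction, which is not addressed directly by the hypothesis \eqref{Ortogonale} and requires a Lemma \ref{Lemma4.3}-style estimate leveraging the parity-in-$t$ structure of $\Psi_{\l}$; together with the bookkeeping needed to ensure that the comparison between the global $S^1(\H^n)$-norms of the cut-off functions and the $\g{H}$-norms on $\O_T$ does not absorb the small constants required for the Lyapunov--Schmidt step in the final section.
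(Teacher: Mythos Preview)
Your strategy coincides with the paper's: use symmetry to annihilate the horizontal kernel directions $T_1\o_{\l},\ecc,T_{2n}\o_{\l}$ (the paper invokes the involution $(x,t)\mapsto(-x,t)$, which is the $-I\in U(n)$ case of your $U(n)$-invariance argument), use Lemma~\ref{Lemma4.3} to make $\rho u$ almost orthogonal to $\de\o_{\l}/\de\l$ and $\o_{\l}$, and then conclude from \eqref{NonDegGlob}, \eqref{Ortogonalita} and linear algebra.

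You are right to single out $T_0\o_{\l}=\de\o_{\l}/\de t$ as the delicate direction; note, however, that your proposed fix --- the $t$-parity of $\Psi_{\l}$ and $\de\Psi_{\l}/\de\l$ --- only takes care of the second and third inequalities (where the argument $\rho\Psi_{\l}$ is indeed even in $t$, hence orthogonal to the $t$-odd function $T_0\o_{\l}$). It does \emph{not} touch the first inequality, because a general $u\in\widetilde{X}_T$ is a function of $(|z|,t)$ with no imposed $t$-parity, so $\rho u$ may have a nontrivial component along $T_0\o_{\l}$, and then $d^2\ci{J}(\o_{\l})[\rho u,\rho u]$ loses exactly that part of the norm on the right-hand side. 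The paper's one-line proof asserts orthogonality to \emph{all} $T_i\o_{\l}$ from the $(x,t)\mapsto(-x,t)$ symmetry, but that map fixes $T_0$ and $\o_{\l}$, so it does not see the $T_0$ direction either; your concern is therefore well placed rather than a mere bookkeeping worry. A clean way to close the gap is to restrict one step further: the map $(z,t)\mapsto(\bar z,-t)$ is a group automorphism of $\H^n$ sending $X_j\mapsto X_j$, $Y_j\mapsto -Y_j$, hence preserving $\D_{\H^n}$ and $\ci{J}_T$; both $\o_{\l}$ and $\Psi_{\l}$ are invariant under it. By Palais' principle one may work in the subspace of $\widetilde{X}_T$ of $t$-even functions, and there $\rho u\perp T_0\o_{\l}$ is automatic, after which your (and the paper's) argument goes through verbatim.
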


 \begin{proof}
  Since $u\in\widetilde{X}_T$, $u\rho$ is invariant with respect to the symmetry $(x,t)\mapsto (-x,t)$, one has
  $$\int_{\H^n}\grad_{\H^n} (\rho u)\per\grad_{\H^n} T_i(\o_{\l})=0.$$
  The claim follows by Lemma \ref{Lemma4.3}, by equations \eqref{NonDegGlob} and \eqref{Ortogonalita}, and elementary linear algebra.
 \end{proof}
 
 \begin{lemma}
  For every $\e>0$ there exist constants $T_0$ and $C$ such that for $T\ge T_0$ if conditions \eqref{Ortogonale} and \eqref{OrtogonaleAutovNeg} hold,
  then
  $$|d^2\ci{J}_T(\Psi_{\l})[u,u]| \ge C \int_{\O_T}|\grad_{\H^n} u|^2,$$
  $$|d^2\ci{J}_T(\Psi_{\l})[\Psi_{\l},\Psi_{\l}]| \ge C \int_{\O_T}|\grad_{\H^n} \Psi_{\l}|^2$$
  and
  $$|d^2\ci{J}_T(\Psi_{\l})[\Psi_{\l},u]| < \e \N{\Psi_{\l}}_{X_T} \N{u}_{X_T}.$$
 \end{lemma}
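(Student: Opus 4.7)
The plan is to reduce the three estimates to the preceding lemma by comparing the bilinear form $d^2\ci{J}_T(\Psi_\l)$ on $\O_T$ with $d^2\ci{J}(\o_\l)$ evaluated on the cut-off functions $\rho u$ and $\rho\Psi_\l$. Using that $\rho\equiv 1$ on $\O_T$ and $\rho\equiv 0$ off $B_{2T}\setminus B_{1/2}$, the difference $d^2\ci{J}_T(\Psi_\l)[u,u]-d^2\ci{J}(\o_\l)[\rho u,\rho u]$ splits into a kinetic piece $-\int_W |\grad_{\H^n}(\rho u)|^2$ and two potential pieces, $-(2^*-1)\int_{\O_T}(\Psi_\l^{2^*-2}-\o_\l^{2^*-2})u^2$ and $(2^*-1)\int_W \o_\l^{2^*-2}(\rho u)^2$. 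I would then show that each of these three terms is $o(1)\N{u}_{T,\g{H}}^2$ uniformly in $\l$ as $T\to\infty$.

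For the kinetic piece, I would write $\grad_{\H^n}(\rho u)=\rho\grad_{\H^n} u+u\grad_{\H^n}\rho$, use the pointwise bound $|\grad_{\H^n}\rho|\le C/|x|$ on $W$ (consistent with how $\rho$ was chosen), and apply \eqref{MinimoCilindro} to conclude. For the first potential piece, I would apply Hölder with exponents $Q/2$ and $Q/(Q-2)$, control $\N{u}_{L^{2^*}(\O_T)}$ by $\N{u}_{X_T}$ via Proposition \ref{lemmaSobolev}, and then bound $\N{\Psi_\l^{2^*-2}-\o_\l^{2^*-2}}_{L^{Q/2}(\O_T)}$ by repeating the computation of Lemma \ref{TermineB} with exponents $\alfa=2^*-2$ and $\beta=Q/2$: by periodicity one restricts to $|x|/\l\in[T^{-1/2},T^{1/2}]$, the tail $\sum_{k\ne 0}\o_{\l/T^k}$ is dominated uniformly in $\l$ by a geometric series in $T^{-(Q-2)/2}$, and the elementary inequality $(x+y)^{\alfa}-x^{\alfa}\le C(x^{\alfa-1}y+y^{\alfa})$ (or the subadditive one when $\alfa\le 1$) lets one integrate. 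The $W$-potential is handled analogously, since $W\subset B_1\cup(\H^n\setminus B_T)$ forces $\N{\o_\l}_{L^{2^*}(W)}\to 0$ as $T\to\infty$ uniformly in $\l$.

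Combining these bounds with the coercivity estimate of the preceding lemma gives
$$d^2\ci{J}_T(\Psi_\l)[u,u]\ge C\int_{\H^n}|\grad_{\H^n}(\rho u)|^2 - o(1)\N{u}_{T,\g{H}}^2 \ge C\int_{\O_T}|\grad_{\H^n} u|^2 - o(1)\N{u}_{T,\g{H}}^2.$$
To convert the right-hand side to the $\N{\cdot}_{X_T}$ norm required by the statement, I would apply Hardy's inequality on $\H^n$ to the compactly supported function $\rho u$: this gives $\int_{\O_T}u^2/|x|^2\le C\int_{\H^n}|\grad_{\H^n}(\rho u)|^2$, and together with the kinetic decomposition and \eqref{MinimoCilindro} we obtain $\N{u}_{T,\g{H}}\le C\N{u}_{X_T}$ for $T$ large. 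Absorbing the error into the main term produces the first estimate; the same argument with $u$ replaced by $\Psi_\l$ gives the second; and the third follows from the mixed bound in the previous lemma by the same comparison together with polarization.

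The main obstacle is the uniform-in-$\l$ smallness of $\N{\Psi_\l^{2^*-2}-\o_\l^{2^*-2}}_{L^{Q/2}(\O_T)}$, but since the computation mirrors that of Lemma \ref{TermineB} essentially verbatim (only the values of $\alfa$ and $\beta$ change), no new idea is required; the remaining manipulations are routine applications of Sobolev, Hardy, Hölder, and the preceding lemma.
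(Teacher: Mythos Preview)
Your approach is essentially the same as the paper's: compare $d^2\ci{J}_T(\Psi_\l)$ on $\O_T$ with $d^2\ci{J}(\o_\l)$ on $\rho u$, split the discrepancy into a gradient term on $W$, a potential term on $\O_T$ involving $\Psi_\l^{2^*-2}-\o_\l^{2^*-2}$, and a potential term on $W$, and bound each by $o(1)\N{u}^2$ using \eqref{MinimoCilindro} and the computation of Lemma~\ref{TermineB}. Your extra remark on recovering the $X_T$-norm from the $\N{\cdot}_{T,\g{H}}$-norm via Hardy applied to $\rho u$ is a useful detail the paper leaves implicit.

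There is one slip: the claim that $\N{\o_\l}_{L^{2^*}(W)}\to 0$ uniformly in $\l$ is false at the endpoints of a fundamental interval. If $\l\sim 1$ then $\int_{B_1\setminus B_{1/2}}|\o_\l|^{2^*}$ is bounded away from zero, and likewise for $\l\sim T$ on the outer shell. The correct (and simpler) argument is the pointwise bound $\o_\l^{2^*-2}\le C|x|^{-2}$, which holds uniformly in $\l$ since $\o_\l(x)\lesssim\min\{\l^{-(Q-2)/2},\,\l^{(Q-2)/2}|x|^{-(Q-2)}\}$; then
\[
\int_W \o_\l^{2^*-2}(\rho u)^2 \le C\int_W \frac{u^2}{|x|^2} \le \frac{C}{\log T}\,\N{u}_{T,\g{H}}^2
\]
by \eqref{MinimoCilindro}. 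This is presumably what the paper means by ``the second in a trivial way''. With this correction your argument goes through and matches the paper's.
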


 \begin{proof}
  By direct computation we find
  $$\left| d^2\ci{J}(\omega_{\l})[\rho u,\rho u] - d^2\ci{J}_T(\Psi_{\l})[u,u]\right|  =$$
  $$=\left|\int_{\H^n}|\grad_{\H^n}(\rho u)|^2 -(2^*-1)|\omega_{\l}|^{2^*-2}\rho^2u^2 +\right.$$
  $$ \left.- \int_{\Omega_T}|\grad_{\H^n} u|^2  -(2^*-1)|\Psi_{\l}|^{2^*-2}u^2\right|\le$$
  $$\le (2^*-1)\left|\int_{\Omega_T}\left(|\Psi_{\l}|^{2^*-2}-|\omega_{\l}|^{2^*-2}\right)u^2\right|+$$
  $$+ (2^*-1) \left|\left( \int_{B_{2T}\setminus B_{T}} + \int_{B_1\setminus B_{1/2}}\right)|\omega_{\l}|^{2^*-2}\rho^2u^2\right| +$$
  $$+ 2\left|\left( \int_{B_{2T}\setminus B_{T}} + \int_{B_1\setminus B_{1/2}}\right) (u^2|\grad_{\H^n}\rho|^2+\rho^2|\grad_{\H^n} u|^2)\right| .$$
  The first term can be estimated as in Lemma \ref{TermineB}, the second in a trivial way, and the third has been essentially already
  estimated, to prove that for every $\e$
  there exists $T$ big enough to ensure that the whole sum is bounded by $\e \N{u}_{X_T}^2$.
 
  Analogously
  $$ \left| \int_{\H^n}|\grad_{\H^n} (\rho u)|^2 - \int_{\O_T}|\grad_{\H^n} u|^2\right| \le \e \N{u}^2_{X_T}.$$
  This implies the first part of the thesis. The other statements are deduced in an analogous manner.
 \end{proof}
 
 \begin{prop}\label{NonDegDiffSec}
  There exist constants $T_0$ and $C$ such that for $T\ge T_0$ the operator $\ci{J}''_T(\Psi_{\l})$
  is invertible on the orthogonal space of $\frac{\de\Psi_{\l}}{\de\l}$ in $X_T$, and
  $\N{\ci{J}''_T(\Psi_{\l})^{-1}}_{\ci{L}(X_T)}\le C$.
 \end{prop}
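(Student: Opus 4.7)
The plan is to assemble the three estimates proved in the previous lemma into a block-coercivity bound for the bilinear form $d^2\ci{J}_T(\Psi_{\l})[\cdot,\cdot]$ restricted to the hyperplane $H_T := \{\phi\}^{\perp}$, where $\phi := \frac{\de\Psi_{\l}}{\de\l}$. First we introduce the orthogonal projection $\widetilde{\Psi}_{\l} = \Psi_{\l} - \alfa\phi$ of $\Psi_{\l}$ onto $H_T$, with $\alfa := \bra\Psi_{\l}, \phi\ket_{X_T}/\N{\phi}_{X_T}^2$. The immersion argument made after formula \eqref{DerivataLambda} guarantees $\widetilde{\Psi}_{\l}\ne 0$, and by inspection of the explicit formulas for $\Psi_{\l}$ and $\phi$ via \eqref{DerivataLambda} one verifies $\N{\widetilde{\Psi}_{\l}}_{X_T} \gtrsim \N{\Psi_{\l}}_{X_T}$ uniformly for $T$ large.

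This gives the $X_T$-orthogonal decomposition $H_T = \R\widetilde{\Psi}_{\l} \oplus W$ with $W = \{u\in H_T : u\perp \Psi_{\l}\}$, and crucially $W$ coincides with the space of $u\in\widetilde{X}_T$ satisfying both \eqref{Ortogonale} and \eqref{OrtogonaleAutovNeg}, so the previous lemma applies on $W$. Writing $u = a\widetilde{\Psi}_{\l} + w$ and expanding by bilinearity yields
$$d^2\ci{J}_T(\Psi_{\l})[u,u] = a^2\, d^2\ci{J}_T[\widetilde{\Psi}_{\l},\widetilde{\Psi}_{\l}] + 2a\, d^2\ci{J}_T[\widetilde{\Psi}_{\l},w] + d^2\ci{J}_T[w,w].$$
On $W$ the previous lemma delivers $d^2\ci{J}_T[w,w] \ge C\N{w}_{X_T}^2$, while on the $\Psi_{\l}$-direction it gives $d^2\ci{J}_T[\Psi_{\l},\Psi_{\l}] \le -C\N{\Psi_{\l}}_{X_T}^2$ together with $|d^2\ci{J}_T[\Psi_{\l},w]|\le \e\N{\Psi_{\l}}_{X_T}\N{w}_{X_T}$. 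Re-expanding $\widetilde{\Psi}_{\l} = \Psi_{\l} - \alfa\phi$ in the remaining two terms reduces them, up to controllable $\phi$-errors, to the $\Psi_{\l}$-versions we already know.

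Once this block structure is established, the self-adjoint operator $L := P_{H_T}\circ\ci{J}''_T(\Psi_{\l})|_{H_T}$ reads, in an orthonormal basis adapted to the splitting, as a block matrix with a scalar entry $\le -C'$ in the $\widetilde{\Psi}_{\l}$-slot, a positive-definite block $\ge C$ on $W$, and off-diagonal entries of operator norm at most $\e$. A Neumann series argument (writing $L = L_{\text{diag}}(I + L_{\text{diag}}^{-1}L_{\text{off}})$) then shows $L$ is invertible with $\N{L^{-1}}_{\ci{L}(H_T)}\le 2/\min(C,C')$ as soon as $\e < \min(C,C')$, which is the desired uniform bound.

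The main obstacle will be controlling the $\phi$-contributions in the expansions above, i.e.\ showing that $|d^2\ci{J}_T[\phi,\phi]| = o_T(1)\N{\phi}_{X_T}^2$ and $|d^2\ci{J}_T[\phi,w]| = o_T(1)\N{\phi}_{X_T}\N{w}_{X_T}$ for $w\in W$. These are not covered directly by the previous lemma, but should follow by re-running the cutoff plus Hardy/$L^{p,q}$ scheme of Lemmas \ref{TermineB} and \ref{Lemma4.3}, exploiting the fact from Proposition \ref{teorMU} that in the unperturbed case $\frac{\de\o_{\l}}{\de\l}$ lies in the kernel of $\ci{J}''(\o_{\l})$, so that $\ci{J}''_T(\Psi_{\l})\phi \to 0$ in the relevant dual norm as $T\to\infty$.
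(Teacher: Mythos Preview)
Your block-decomposition strategy on $H_T=\{\phi\}^{\perp}$ is exactly what the paper means by ``elementary Hilbert space theory'': split off a one-dimensional negative direction from the positive-definite piece $W$, feed in the three estimates of the previous lemma as block data, and invert.

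You overlook one simplification, though, which dissolves your ``main obstacle'' entirely. Since $\Psi_{\l}=\l^{(2-Q)/2}\Psi_1\circ\d_{1/\l}$ and $\Psi_1\in X_T$, formulas \eqref{DilatazioneGradiente} and \eqref{CambioDominioGradiente} give $\N{\Psi_{\l}}_{X_T}=\N{\Psi_1}_{X_T}$ for every $\l$; differentiating in $\l$ yields
\[
\bra\Psi_{\l},\phi\ket_{X_T}=\tfrac{1}{2}\,\tfrac{d}{d\l}\N{\Psi_{\l}}_{X_T}^2=0.
\]
Hence $\alfa=0$ and $\widetilde{\Psi}_{\l}=\Psi_{\l}$ already lies in $H_T$, so the decomposition $H_T=\R\Psi_{\l}\oplus W$ is orthogonal with no $\phi$-correction terms whatsoever. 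The three inequalities of the previous lemma then plug directly into your block-matrix (or Neumann-series) inversion, and nothing further is needed.

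Your fallback plan---showing $\ci{J}''_T(\Psi_{\l})[\phi]\to 0$ via Proposition \ref{teorMU} and the cutoff scheme of Lemmas \ref{Lemma4.2}--\ref{Lemma4.3}---is sound and would also close the argument, but it is unnecessary once the orthogonality above is observed.
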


 \begin{proof}
  It follows from the preceding lemmas and elementary Hilbert space theory.
 \end{proof}
 
 \section{Proof of the main Theorem}
 We have proved that, for $T$ big enough, on the orthogonal in $\widetilde{X}_T$ of the tangent of the curve $\ci{Z}_T$ the second differential of $\ci{J}_T$ is non degenerate,
 with norm bounded independently by $\l$ and $T$.
 Let us call $W$ this orthogonal in the point $\Psi_{\l}\in\ci{Z}$ and $\pi$ the orthogonal projection on $W$. We remember that our aim is to solve $\grad_{\H^n}\ci{J}_T(u)=0$.
 Following the standard reasoning in \cite{AM} we note that this is equivalent to solve
 $$\pi\grad_{\H^n}\ci{J}_T(\Psi_{\l}+w)=0$$
 (auxiliary equation) and
 $$(I-\pi)\grad_{\H^n}\ci{J}_T(\Psi_{\l}+w)=0$$
 (bifurcation equation) with $w\in W$.
 
 \begin{lemma}
  There exists $T_0$ such that the auxiliary equation has a unique solution $w_T(\l)$; furthermore $\sup_{\l}\N{w_T(\l)}\to 0$ for $T\to\infty$.
 \end{lemma}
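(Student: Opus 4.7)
The plan is to recast the auxiliary equation as a fixed-point problem on $W$ and apply the contraction theorem. Setting
$$R_{\l}(w):=\grad_{\H^n}\ci{J}_T(\Psi_{\l}+w)-\grad_{\H^n}\ci{J}_T(\Psi_{\l})-\ci{J}''_T(\Psi_{\l})[w]$$
for the nonlinear remainder and $L:=\pi\,\ci{J}''_T(\Psi_{\l})|_W$ for the restriction of the linearization (which by Proposition \ref{NonDegDiffSec} is invertible with $\N{L^{-1}}$ bounded uniformly in $\l$ and $T\ge T_0$), the equation $\pi\grad_{\H^n}\ci{J}_T(\Psi_{\l}+w)=0$ becomes
$$w=-L^{-1}\pi\bigl(\grad_{\H^n}\ci{J}_T(\Psi_{\l})+R_{\l}(w)\bigr)=:F_{\l,T}(w).$$
By Proposition \ref{StimaGradiente} the source term $L^{-1}\pi\grad_{\H^n}\ci{J}_T(\Psi_{\l})$ has norm at most some $\e_T$ with $\e_T\to0$ uniformly in $\l$, so the whole argument reduces to showing that $R_{\l}$ is sufficiently superlinear at the origin to close a contraction.

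The core step is the Lipschitz estimate on $R_{\l}$. Testing against $\phi\in\widetilde{X}_T$ and writing $\bra R_{\l}(w),\phi\ket=-\int_{\O_T}\bigl[f(\Psi_{\l}+w)-f(\Psi_{\l})-f'(\Psi_{\l})w\bigr]\phi$ with $f(s)=|s|^{2^*-2}s$, the elementary pointwise bound
$$|f(a+b)-f(a)-f'(a)b|\le C\bigl(|a|^{(2^*-3)_+}\,|b|^{\min(2,\,2^*-1)}+|b|^{2^*-1}\bigr),$$
combined with Hölder's inequality and the Sobolev-type embedding of Proposition \ref{lemmaSobolev} (whose constant only grows as a power of $\log T$), should yield
$$\N{R_{\l}(w_1)-R_{\l}(w_2)}_{X_T}\le K(T)\bigl(\N{w_1}_{X_T}+\N{w_2}_{X_T}\bigr)^{\gamma}\N{w_1-w_2}_{X_T}$$
for some $\gamma=\min(1,\,2^*-2)>0$ and $K(T)$ polynomial in $\log T$.

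With these ingredients in hand I would pick a shrinking radius, for instance $r_T=\sqrt{\e_T}$, and verify that for $T$ large enough both $\N{L^{-1}}\e_T\le r_T/2$ and $\N{L^{-1}}K(T)r_T^{\gamma}\le 1/2$, uniformly in $\l$. This makes $F_{\l,T}$ a $\tfrac12$-contraction of the closed ball $\overline{B}_{r_T}(0)\subset W$ into itself, so Banach's theorem produces a unique fixed point $w_T(\l)$, automatically satisfying $\sup_{\l}\N{w_T(\l)}_{X_T}\le r_T\to 0$.

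The only delicate point is bookkeeping how the $\log T$ factors coming from Proposition \ref{lemmaSobolev} accumulate in $K(T)$ and compete with the smallness of $\e_T$. Inspecting the final estimate in the proof of Lemma \ref{TermineB} however shows that $\e_T$ decays as a positive power of $1/T$ modulated by $\log T$, which comfortably beats any polynomial in $\log T$; so the balance between the two is easy to arrange and the contraction closes.
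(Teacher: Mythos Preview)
Your approach is the same as the paper's: rewrite the auxiliary equation as a fixed point for $w\mapsto -L^{-1}\pi\bigl(\grad_{\H^n}\ci{J}_T(\Psi_{\l})+R_{\l}(w)\bigr)$ and close a contraction using Proposition~\ref{StimaGradiente} for the smallness of the source and Proposition~\ref{NonDegDiffSec} for the uniform bound on $L^{-1}$. The paper's own proof is considerably terser (it simply asserts $R(\Psi_{\l},w)=o(\N{w})$ and that $N_{\l}$ is a contraction without spelling out the pointwise estimate on $f(a+b)-f(a)-f'(a)b$, the H\"older/Sobolev bookkeeping, or an explicit choice of radius), so your write-up is in fact more detailed than what the paper provides; your observation that $\e_T$ decays like a negative power of $T$ times logarithmic factors, and hence dominates any polynomial-in-$\log T$ growth of $K(T)$, is exactly the reason the contraction closes uniformly in $\l$.
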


 \begin{proof}
  Write
  $$\grad_{\H^n}\ci{J}_T(\Psi_{\l}+w) = \grad_{\H^n}\ci{J}_T(\Psi_{\l}) + \ci{J}''_T[w] + R(\Psi_{\l},w)$$
  with $R(\Psi_{\l},w)=o(\N{w})$ and $R(\Psi_{\l},w)-R(\Psi_{\l},v)=o(\N{w-v})$, so that the auxiliary equation becomes
  $$\pi\grad_{\H^n}\ci{J}_T(\Psi_{\l}) + \pi\ci{J}''_T(\Psi_{\l})[w] + \pi R(\Psi_{\l},w)= 0,$$
  namely
  $$w=-(\pi\ci{J}''_T(\Psi_{\l}))^{-1}\left[\pi\grad_{\H^n}\ci{J}_T(\Psi_{\l}) + \pi R(\Psi_{\l},w)\right]:= N_{\l}(w).$$
  By Propositions \ref{StimaGradiente} and \ref{NonDegDiffSec}, $N$ is a contraction if $T$ is big enough,
  and so the auxiliary equation has an unique solution $w=w_T(\l)$.
  Furthermore for every $r>0$ there exists $T$ big enough such that $B_r(\Psi_{\l})\cap W$ is mapped into itself by $N$.
  So $\sup_{\l}\N{w_T(\l)}$ tends to zero for $T\to\infty$.
 \end{proof}
 
 \begin{proof}[Proof of Theorem \ref{Teorema}]
  Let us consider the function
  $$\Phi(\l)= \ci{J}_T(\Psi_{\l}+w(\l)).$$
  It is continuous and periodic, so it a stationary point $\l_0$.
  Following the standard argument of Theorem 2.12 and Remark 2.14 in \cite{AM}, with the need for only formal modifications,
  the fact that $\Phi'(\l_0)= \ci{J}_T'(\Psi_{\l_0}+w(\l_0))\per(\frac{\de\Psi_{\l_0}}{\de\l}+w'(\l_0))$ implies $u=\Psi_{\l_0}+w(\l_0)$
  to solve the bifurcation equation, and so to be a stationary point of $\ci{J}_T$.
  
  The smoothness of the solution can be proved with the same method of Appendix B in \cite{Str}.

 Also $\l^{(2-Q)/2}u\circ\d_{\l^{-1}}$ is a critical point of $\ci{J}_T$, and by the unicity in the fixed point theorem it must be equal to
 $\Psi_{\l_0\l}+w(\l_0\l)$, and so the whole curve $\widetilde{\ci{Z}}_T=\{ \Psi_{\l}+w(\l)\}$ consists of critical points of $\ci{J}$.
 

 To prove the positivity, let us notice that from the proof of Proposition \ref{NonDegDiffSec} follows that
 $\ci{J}(\o_{\l})$ has Morse index one on $\left\{ \l\frac{\de\o_{\l}}{\de \l}\right\}^{\perp}$.
 By continuity, the same holds for the orthogonal to the tangent space to $\widetilde{\ci{Z}}_T$. Since $d\ci{J}_T$ is zero on $\widetilde{\ci{Z}}_T$, the tangent of $\widetilde{\ci{Z}}_T$ is in the kernel of $\ci{J}''_T$.
 So the Morse index of $\ci{J}_T$ on $\widetilde{X}_T$ is one.

 By a slight adaptation of the proof of Proposition 3.2 in \cite{BCD} the set $\{u\ne 0\}$ has at most one connected component modulo $\d_T$, and so
 $u$ does not change sign. By construction it is evident that it must be weakly positive (and even if it was not, it would be enough to change sign).
 The strict positivity follows from Bony's maximum principle (see \cite{Bon}).
 
 The last assertion follows by construction.
 \end{proof}

\end{document}